\documentclass[11pt]{amsart}

\usepackage{hyperref}

\usepackage{graphicx, enumerate, url}
\usepackage{amssymb,mathtools}
\usepackage{algorithm}
\usepackage{algpseudocode}
\usepackage{color}
\usepackage{tikz}
\usetikzlibrary{3d,calc}

\numberwithin{equation}{section}
\numberwithin{algorithm}{section}

\theoremstyle{plain}
\newtheorem{theorem}{Theorem}[section]

\newtheorem{lemma}[theorem]{Lemma}
\newtheorem{corollary}[theorem]{Corollary}

\theoremstyle{definition}

\newtheorem{problem}[theorem]{Problem}

\theoremstyle{remark}

\DeclareMathOperator{\V}{V}

\DeclareMathOperator{\argmax}{arg\,max}
\DeclareMathOperator{\argmin}{arg\,min}


\newcommand{\N}{\mathbb{N}}

\newcommand{\R}{\mathbb{R}}
\newcommand{\Z}{\mathbb{Z}}
\newcommand{\s}{\mathbf{s}}

\newcommand{\x}{\mathbf{x}}

\newcommand{\y}{\mathbf{y}}
\newcommand{\z}{\mathbf{z}}
\newcommand{\U}{\mathbf{U}}
\renewcommand{\V}{\mathbf{V}}
\newcommand{\W}{\mathbf{W}}
\newcommand{\cA}{\mathcal{A}}
\newcommand{\cB}{\mathcal{B}}
\newcommand{\cC}{\mathcal{C}}

\newcommand{\ba}{\mathbf{a}}
\newcommand{\bb}{\mathbf{b}}
\newcommand{\bc}{\mathbf{c}}

\newcommand{\bp}{\mathbf{p}}
\newcommand{\br}{\mathbf{r}}
\newcommand{\bu}{\mathbf{u}}
\newcommand{\bv}{\mathbf{v}}
\newcommand{\bw}{\mathbf{w}}
\newcommand{\bx}{\mathbf{x}}
\newcommand{\by}{\mathbf{y}}

\newcommand{\rB}{\mathrm{B}}
\newcommand{\rC}{\mathrm{C}}

\newcommand{\rL}{\mathrm{L}}
\newcommand{\rS}{\mathrm{S}}
\newcommand{\0}{\mathbf{0}}
\newcommand{\1}{\mathbf{1}}
\newcommand{\trans}{^\top}

\newcommand{\et}{\mbox{\boldmath{$\eta$}}}


\begin{document}
\title{Partial minimization of strict convex functions and tensor scaling}
\author[Shmuel Friedland]{Shmuel~Friedland}
\address{Department of Mathematics and Computer Science, University of Illinois at Chicago, Chicago, Illinois, 60607-7045, USA }
\email{friedlan@uic.edu}
\date{June 4, 2019}
\begin{abstract}  Assume that $f\in\rC^2(\R^n)$ is a strict convex function with a unique minimum.  We divide the vector of $n$ variables to $d\ge 2$ groups of vector  subvariables.  We assume that we can find the partial minimum of $f$  with respect to each vector subvariable while other variables are fixed.  We then describe an algorithm that partially minimizes each time on a specifically chosen vector subvariable.  This algorithm converges geometrically to the unique minimum.  The rate of convergence depends on the uniform bounds on the eigenvalues of the Hessian of $f$ in the compact sublevel set $f(\x)\le f(\x_0)$, where $\x_0$ is the starting point of the algorithm.  In the case where $f(\x)=\x^\top A\x+\bb^\top \x$ and $d=n$ our method can be considered as a generalization of the classical conjugate gradient method.
 The main result of this paper is the observation that the celebrated Sinkhorn diagonal scaling algorithm for matrices, and the corresponding  diagonal scaling of tensors, can be viewed as partial minimization of certain logconvex functions.
\end{abstract}
\maketitle
 \noindent {\bf 2010 Mathematics Subject Classification.}
 15A39, 15A69, 52A41, 65F35, 65K05.

\noindent \emph{Keywords}:  Partial minimization of strict convex functions, positive diagonal scaling of nonnegative tensors, prescribed slice sums, discrete  Schr\"odinger bridge problem, Sinkhorn algorithm.

\maketitle

\section{Introduction} \label{sec:intro} 
Let $f\in\rC^2(\R^n)$ is a strict convex function, that is, the Hessian $H(f)(\x)$ is positive definite for each $\x\in\R^n$.  We assume that $f$ has a minimum at $\x^\star\in\R^n$, which is necessary unique.  It is well known that a necessary and sufficient condition for the existence of $\x^\star$ is:
\begin{eqnarray}\label{maxinfcond}
\lim_{\|\x\|\to \infty} f(\x)=\infty.
\end{eqnarray}
See Lemma \ref{existminlem}.  We now recall the notion of partial minimization of $f$.
For $m\in\N$ denote $[m]=\{1,\ldots,m\}\subset \N$.
Divide the vector $\x=(x_1,\ldots,x_n)^\top$ to $d\ge 2$ groups: $\x^\top=(\x_1^\top,\ldots,\x_d^\top)$, where  $\x_i\in \R^{m_i}$ for $i\in[d]$ and $\sum_{i=1}^p m_i=n$.  (Thus $d\in[n]\setminus\{1\}$.)  View $\x$ as $(\x^j,\x_j)$ where $\x^j\in\R^{n-m_j}$ is obtained from $\x$ by deleting the vector coordinate $\x_j$.  Denote by $\nabla_j f(\x)\in \R^{m_j}$ the vector of derivatives of $f(\x)$ with respect to the coordinates in $\x_j$.
Minimize $f(\x)$ with respect to the variable $\x_j$ while keeping all other variable fixed:
\begin{eqnarray}\label{partmin}
\min\{f(\x), \x_j\in\R^{m_j}, \x=(\x^j,\x_j)\}=f(\x^j, \x_j(\x^j)).
\end{eqnarray}
Our main assumption is that we can find $\x_j(\x^j)$ either precisely, or with a prescribed accuracy.  This assumption holds if $f$ is a polynomial of degree $2$ $f(\x)=\x^\top A\x+\bb^\top\x+c$, where $A$ is a symmetric positive definite matrix and $d=n$.  This is the classical case of the conjugate gradient \cite{HS52}.
The main point of this paper is to show that this assumption holds if we consider the classical scaling algorithm of Sinkhorn \cite{Sin64}, or more general tensor scaling problem \cite{Bap82, Rag84, FL89, Fri11}.  Matrix scaling problems arise in several areas of applied and pure mathematics.  There are many available algorithms to achieve the scaling.  See \cite{ALOW17} for a historical survey and for new suggested algorithms.  The main purpose of this paper to show that matrix and tensor scaling could be efficiently implemented  using our simple algorithm which ensures geometric convergence.  While for matrices our algorithm reduces to alternating scaling, for tensors the algorithm chooses the order of scaling.

We now state briefly our algorithm: 

 \noindent
 $\quad\quad$ $\;\;$\textbf{Algorithm} 
\begin{itemize}
\item[] Choose $\x_0\in\R^n$. 
\item[] for $k:=0,1,2, \ldots$
\item[] $\quad$$j\in\argmax\{\|\nabla_l f(\x_k)\|, l\in[d]\}$
\item[] $\quad$$\x_{k+1}=(\x_k^j, \x_j(\x_k^j))$
\item[]end
\end{itemize} 
We show that this algorithm converges geometrically to $\x^\star$ with at least a factor $(1-\frac{\alpha}{\sqrt{d-1}\beta})$, where $\alpha$ and $\beta$ are the minimum and the maximum of the lowest and highest eigenvalues of $H(f)$ respectively in the compact convex sublevel region $\{\x, f(\x)\le f(\x_0)\}$.  

Note that if $d=2$, i.e., $\x=(\x_1,\x_2)$, then after one iteration the above minimization algorithm is an alternating minimization, as in the Sinkhorn algorithm.  Instead of using the standard  coordinates $\x=(x_1,\ldots,x_n)^\top$ we can use the coordinates $\hat\x=P\x$, where the $n$ rows  of $P$: $\bp_1^\top, \ldots,\bp_n^\top$ are linearly independent.
In the conjugate gradient algorithm we need to choose the vectors $\bp_1,\ldots,\bp_n$ to be orthogonal with respect to $A$: $\bp_i^\top A\bp_j=0$ for $i\ne j$ \cite{HS52}.

We now explain briefly why Sinkhorn scaling algorithm for matrices can be stated as a partial minimization of strict convex function.  For simplicity of exposition ourselves mainly to positive rectangular matrices $B=[b_{i,j}]\in\R^{l\times m}$.  For $\bu=(u_1,\ldots,u_l)^\top \in\R^l$ we denote by  $D(\bu)\in\R^{l\times l}$ the diagonal matrix with the diagonal entries $e^{u_1},\ldots, e^{u_l}$. Let $\1_n=(1,\ldots,1)\trans \in\R^n$ and assume  $\br=(r_1,\ldots,r_l)^\top, \bc=(c_1,\ldots,c_m)^\top$ are given positive vectors satisfying $\1_l^\top \br=\1_m^\top\bc$.  The scaling problem is finding $\bu,\bv$ such that the matrix $D(\bu)B D(\bv)$ has rows and column sums $\br$ and $\bc$ respectively:
\begin{eqnarray}\label{matresprb}
D(\bu) B D(\bv)\1_m=\br,\quad \1_l^\top D(\bu) B D(\bv)=\bc^\top, 
\end{eqnarray}
for some $\bu\in\R^l,\bv\in\R^m$.
Clearly, this problem is equivalent to the scaling problem when we replace $\br, \bc$ with $b\br,b\bc$ for some positive $b>0$.  For a given nonzero vector $\bw\in \R^n$ denote by $\rL(\bw)=\{\x\in\R^n, \bw^\top \x=0\}$.  The the dimension of $\rL(\bw)$ is $n-1$ and we identify $\rL(\bw)$ with $\R^{n-1}$.  Let 
\begin{eqnarray*}
f(\bu, \bv)=\sum_{i=j=1}^{l,m} b_{i,j}e^{u_i+v_j}.
\end{eqnarray*}
Clearly, $f(\x), \x=(\bu,\bv)$ is a convex function on $\R^{l+m}$.  We consider the restriction of $f$ to $L(\br)\times L(\bc)$. Since $B>0$ it follows that $f(\x)$ is strictly convex on $L(\br)\times L(\bc)$ and the condition \eqref{maxinfcond} holds, see Section \ref{sec:tenscalgo}.  Let $\x^\star=(\bu^\star, \bv^\star)\in \rL(\br)\times \rL(\bc)$ be the minimum point of $f| \rL(\br)\times \rL(\bc)$.  Use Lagrange multipliers to deduce that $D(\bu^\star)B D(\bv^\star)$ has row and column sums $b\br, b\bc$ for some $b>0$.
Fix $\bv\in \rL(\bc)$ and find partial minimum of $\min\{f(\bu,\bv), \bu\in \rL(\br)\}$.  Use Lagrange multipliers to deduce that this minimum is achieved at unique $\bu(\bv)$ such that the row sums of $D(\bu(\bv)) B D(\bv)$ are of the form $b\br$.
We now give a simple formula for $\bv$.  Observe first that the equality $D(\bu)BD(\bv)\1_m=\br$ is uniquely solvable by
$\tilde u_i=\log r_i - \log (BD(\bv)\1_m)_i$ for $i\in[l]$.  Let $\tilde \bu(\bv)=(\tilde u_1,\ldots,\tilde u_l)^\top$.   Note that $\tilde\bu(\bv)$ is the scaling part of Sinkhorn algorithm.
Then $\bu(\bv)=\tilde \bu(\bv)-a\1_l$, where $a=\br^\top\tilde\bu(\bv)/(\br^\top \1_l)$.
Similarly, for a fixed $\bu\in\rL(\br)$ the minimum of $f(\bu,\bv)$ for $\bv\in\rL(\bc)$ is achieved for unique $\bv(\bu)$ which can be obtained as follows. First by use Sinkhorn scaling to $D(\bu)BD(\tilde\bv(\bu))$ to have the column sum $\bc$.  Second let $\bv(\bu)=\tilde\bv(\bu) -(\bc^\top\tilde\bv(\bu)/\bc^\top\1_m)\1_m$.
Since $d=2$ the partial minimization algorithm is completely equivalent to Sinkhorn minimization algorithm.  The geometric rate of convergence depends on the estimates of the eigenvalues of Hessian on the sublevel set $f(\x)\le f(\x_0)$ in $\rL(\br)\times \rL(\bc)$.  

In the case where $B$ has some zero entires then the scaling problem is solvable if and only if there exist a nonnegative matrix $C=[c_{i,j}]\in\R^{l\times m}$ with the same $0$ pattern as $B$, ($b_{i,j}=0\iff c_{i,j}=0$), and with the row and column sums $\br,\bc$ \cite{Men68}.
The existence of such $C$ is a linear programming problem that can be solved in polynomial time \cite{Kar,Kha,Fri11}.  If $B$ can be scaled, it is possible to convert the scaling problem to partial minimization of $f(\x)$ on a corresponding subspace of $L\subset \rL(\br)\times \rL(\bc)$.

We now summarize the contents of the paper. In Section \ref{sec:convalg} we show that our algorithm converges geometrically to $\x^\star$: the unique minimum point of $f$.  Denote by $V(t)=\{\x\in\R^n, f(\x)\le t\}$
the compact convex sublevel set corresponding to $t\ge t^\star=f(\x^\star)$.
Let $0<\alpha(t)\le \beta(t)$ be the minimum and the maximum of the smallest and the biggest eigenvalues of the Hessian $H(f)$ in $V(t)$.  Let $\kappa(t)=\frac{\beta(t)}{\alpha(t)}$. Set $t_k=f(\x_k)$, where $\x_k$ are given by our algorithm.  Then $t_k$ is a strictly decreasing sequence which converges to $t^\star$, unless the algorithm reaches $\x^\star$ in a finite number of steps .
Theorem \ref{convalgo}   shows that the rate of convergence of $\x_k$ to $\x^\star$ and $t_k$ to $t^\star$ is at least of order $(1-\frac{1}{(d-1)\kappa(t_0)})^{k-1}$.  More precisely, 
\begin{eqnarray*}
&&|t_k-t^\star|\le  \frac{\|\nabla f(\x_1)\|^2}{\alpha^2(t_1)}\prod_{q=1}^{k-1}(1-\frac{1}{(d-1)\kappa(t_q)}), \\
&&\|\x_k-\x^\star\|^2\le \frac{\|\nabla f(\x_1)\|^2}{\alpha(t_1)\alpha(t_k)}\prod_{q=1}^{k-1}(1-\frac{1}{(d-1)\kappa(t_q)}).
\end{eqnarray*}

In Section \ref{sec:tenres} we recall our results on  tensor scaling \cite{Fri11}.  Assume that $\cB=[b_{i_1,\ldots,i_d}]\in \R^{m_1}\times \ldots\times \R^{m_d}$ is a given nonnegative $d$-mode tensor.  Let $\bx=(\x_1,\ldots,\x_d)$, where $\x_j=(x_{j,1},\ldots,x_{d,m_j})^\top\in\R^{m_j}$. A scaling of $\cB$ is the tensor $\cB(\x)=[e^{x_{1,i_1}+\ldots+x_{d,i_d}}b_{i_1,\ldots,i_d}]$.  Let $\s_j$ be positive probability vectors in $\R^{m_j}$ for $j\in[d]$.  Then the scaling problem is to find $\x$ such that the $j$-th slice sum of $\cB(\x)$, obtained by summing on the indices $i_1,\ldots,i_{j-1},i_{j+1},\ldots, i_d$, is $\s_j$ for each $j\in[d]$.  If $\cB$ is positive then
such scaling exists.  If $\cB$ has zero entries then such scaling exists if and only if there exists a nonnegative tensor $\cC$ with the same $0$ pattern as $\cB$ and with the sum slices $\s_1,\ldots,\s_d$ \cite{BR89,FL89, Fri11}.  We show that if scaling of $\cB$ exists then it can be achieved by finding the minimum of the strict convex function $f$ on a subspace $\rL\subset \rL(\s_1)\times\ldots\times \rL(\s_d)$.  

In Section \ref{sec:tenscalgo} we discuss the application of our algorithm to tensor scaling.  In the case where $\cB$ positive, or more general, where the strict convex function
$f$ is defined on the whole  $\rL(\s_1)\times\ldots\times \rL(\s_d)$, our algorithm applies straightforward.  For matrices, $d=2$ it is exactly the Sinkhorn scaling algorithm, which was explained above.  In the case of tensors, $d\ge 3$, the algorithm chooses each time the scaling slice.  In the case where $f$ is strictly convex on a subspace $\rL\subset \rL(\s_1)\times\ldots\times \rL(\s_d)$, we describe a simple modification of our algorithm and justify its geometric  convergence.

In Section \ref{sec:sbp} we show that our algorithm applies also to a generalized discrete Schr\"odinger's bridge problem.  (The discrete Schr\"odinger's bridge problem is a scaling of a given column stochastic matrix to another column stochastic matrix  $B$ so that $B\ba=\bb$, where $\ba,\bb$ are two given positive probabiitiy vectors \cite{GP15, Fri17}.)

\section{The convergence of the algorithm}\label{sec:convalg}
\begin{lemma}\label{existminlem}
Let $f\in\rC^2(\R^n)$ be strictly convex.  Then the following conditions are equivalent:
\begin{enumerate}
\item The function $f$ has a unique minimum $\x^\star\in\R^n$.
\item The condition  \eqref{maxinfcond} holds.
\end{enumerate}
\end{lemma}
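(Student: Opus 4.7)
The plan is to prove the two implications separately.

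For $(2)\Rightarrow(1)$, coercivity (\ref{maxinfcond}) forces every sublevel set $V(t)=\{\x:f(\x)\le t\}$ to be bounded, and each is closed by continuity, hence compact. Taking any $\x_0$ and $t_0=f(\x_0)$ gives a nonempty compact set $V(t_0)$ on which $f$ attains a minimum by Weierstrass, producing a global minimizer $\x^\star$. Uniqueness is immediate from strict convexity: if $\x_1\ne\x_2$ were both global minimizers, strict convexity would give $f(\tfrac12(\x_1+\x_2))<\tfrac12(f(\x_1)+f(\x_2))=\min f$, a contradiction.

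For the reverse direction $(1)\Rightarrow(2)$, I would argue by contradiction. Suppose $\x^\star$ is the unique global minimizer but \eqref{maxinfcond} fails; then there exist a constant $M$ and a sequence $\{\x_k\}$ with $\|\x_k\|\to\infty$ and $f(\x_k)\le M$. Set $\y_k=(\x_k-\x^\star)/\|\x_k-\x^\star\|$ and, by compactness of the unit sphere, extract a subsequence with $\y_k\to\y$, $\|\y\|=1$. For any fixed $R>0$ and $k$ large enough that $\|\x_k-\x^\star\|>R$, convexity of $f$ along the segment from $\x^\star$ to $\x_k$ yields
\[
f(\x^\star+R\y_k)\le\left(1-\frac{R}{\|\x_k-\x^\star\|}\right)f(\x^\star)+\frac{R}{\|\x_k-\x^\star\|}f(\x_k).
\]
Letting $k\to\infty$ with $R$ fixed, the right-hand side tends to $f(\x^\star)$ because $f(\x_k)$ is bounded while its coefficient tends to $0$, and continuity of $f$ sends the left-hand side to $f(\x^\star+R\y)$. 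Hence $f(\x^\star+R\y)\le f(\x^\star)$ for every $R>0$, which contradicts the uniqueness of $\x^\star$ since $\x^\star+R\y\ne\x^\star$.

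There is no deep obstacle in either half; the main thing to get right is the convex-combination bound along the segment $[\x^\star,\x_k]$ and the observation that the coefficient $R/\|\x_k-\x^\star\|$ suppresses the uncontrolled term $f(\x_k)$ in the limit. It is worth noting that the argument for $(1)\Rightarrow(2)$ uses only convexity plus uniqueness of the minimum, so strict convexity is essential only for the uniqueness step of $(2)\Rightarrow(1)$, and the $\rC^2$ hypothesis plays no role in this lemma.
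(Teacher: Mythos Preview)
Your proof is correct. For $(2)\Rightarrow(1)$ you do essentially what the paper does: use coercivity to get a compact sublevel set, extract a minimizer by Weierstrass, and invoke strict convexity for uniqueness. The genuine difference is in $(1)\Rightarrow(2)$. The paper argues directly and quantitatively: it restricts $f$ to rays from $\x^\star$, observes that the directional derivative $g_{\y}'(1)=\nabla f(\y)^\top(\y-\x^\star)$ is positive on the unit sphere about $\x^\star$, takes its minimum $\nu>0$ over that compact sphere, and then uses monotonicity of $g_{\y}'$ to obtain the linear lower bound $f(\x)\ge f(\x^\star)+\nu(\|\x-\x^\star\|-1)$ outside the unit ball. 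Your route is by contradiction via a convex-combination estimate along segments $[\x^\star,\x_k]$ and a compactness-of-directions argument. Both are standard; the paper's version yields an explicit growth rate (useful if one later wants to bound sublevel sets), while yours is softer and, as you note, makes transparent that only continuity and convexity (strict convexity for uniqueness) are needed, with the $\rC^2$ hypothesis playing no role here.
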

\begin{proof}  (1)$\Rightarrow$(2).  Let $\rS^{n-1}$ be the $n-1$ dimensional sphere $\|\y-\x^\star\|=1$.    
Fix $\y\in \rS^{n-1}$.  Consider the strict convex function in one variable: $g_{\y}(t)=f(\x^\star+t(\y-\x^\star))$.  Then $g_{\y}'(0)=0$ and $g_{\y}'(1)=\nabla f(\y)^\top (\y-\x^\star)>0$.  Let $\nu=\min\{g_{\y}'(1), \y\in\rS^{n-1}\}$.  Clearly, $\nu>0$.  As $g_{\y}'(t)$ increases for $t>0$ it follows that $g_{\y}'(t)\ge g'_{\y}(1)$ for $t\ge 1$.  In particular, 
\begin{eqnarray*}
g_{\y}(t)\ge g_{\y}(1)+g_{\x}'(1)(t-1)\ge f(\x^\star) +\nu(t-1) \textrm{ for } t\ge 1
\end{eqnarray*}
Hence $f(\x)\ge f(\x^\star) + \nu(\|\x\|-1)$ if $\|\x-\x^\star\|\ge 1$.  This inequality yields \eqref{maxinfcond}.

\noindent (2)$\Rightarrow$(1)  Fix $\x_0\in\R^n$.  Then there exists $r>0$ such that $\min\{f(\x), \|\x-\x_0\|=r\}>f(\x_0)$.  Let $\min\{f(\x), \|\x-\x_0\|\le r\}=f(\x^\star)$.  Clearly, $\|\x^\star -\x_0\|<r$.  Therefore $\nabla f(\x^\star)=\0$.  As $f(\x)$ is convex we deduce that $f(\x)\ge f(\x_0)$ for each $\x\in\R^n$.  As $f(\x)$ is strictly convex $\x^\star$ is the unique point of minimum of $f$.
\end{proof}
Note that the function $f(x)=e^x, x\in\R$ is strictly convex on $\R$ but $f(x)$ does not have a minimum on $\R$.

In what follows we assume that $f\in\rC^2(\R^n)$ is strictly convex and $\x^\star$ is the unique minimum point of $f$.  Then for each $\x\in\R^n\setminus{\x^\star}$ the  sublevel set
\begin{eqnarray*}
V(t)=\{\y\in\R^n, f(\y)\le t\}, \quad t=f(\x)
\end{eqnarray*}
is a compact strictly convex set, with a $\rC^2$ boundary $\partial V(t)$, with an interior containing $\x^\star$.  Let $t^\star=f(\x^\star)$.  Then $V(t^\star)=\{\x^\star\}$.
Thus $\R^n\setminus \{\x^\star\}$ is parametrized by $\partial V(t), t>t^\star$.

Fix $t_0=f(\x_0)>t^\star$.  Then $f$ is uniformly strictly convex in $V(t_0)$: The eigenvalues of $H(f)(\x), \x\in V(t_0)$ are in a fixed interval $[\alpha(t_0),\beta(t_0)]$ for some $0<\alpha(t_0) \le \beta(t_0)$. 
Thus for each $\x,\y\in V(t_0)$ we have the inequalities:
\begin{eqnarray}\label{strconvond1}
f(\x)+\nabla f(\x)^\top(\y-\x) +\frac{\alpha(t_0)}{2} \|\y-\x\|^2\le f(\y)\le\\ f(\x)+\nabla f(\x)^\top(\y-\x) +\frac{\beta(t_0)}{2} \|\y-\x\|^2.
\label{strconvond2}
\end{eqnarray}
In particular, for $ \x\in V(t_0)$ we have
\begin{eqnarray}\label{lowupbndf}
f(\x^\star)+ \frac{\alpha(t_0)}{2} \|\x-\x^\star\|^2\le f(\x)\le f(\x^\star) +\frac{\beta(t_0)}{2} \|\x-\x^\star\|^2.
\end{eqnarray}
Denote by $\rB(\x, R^2)$ the closed ball $\{\by\in\R^n,\|\bx-\by\|^2\le R^2\}$.  
Let $\kappa(t_0)=\frac{\beta(t_0)}{\alpha(t_0)}$ and define
\begin{eqnarray}\label{defxplusa}
\x^+=\x-\frac{1}{\beta(t_0)}\nabla f(\x), \quad \x^{++}=\x-\frac{1}{\alpha(t_0)}\nabla f(\x).
\end{eqnarray}
In what follows we need the following lemma:
\begin{lemma}\label{xflem}  Assume that $\x\in V(t_0)$.  Let 
\begin{eqnarray}\label{xflem1}
\x^{a}=\x-\frac{2}{\beta(t_0)}\nabla f(\x).
\end{eqnarray}
Then 
\begin{enumerate}
\item 
$f(\x^a)\le f(\x)$.  
\item  $[\x,\x^a]\subset V(t_0)$.
\item  $f(\x)-f(\x^\star)\ge f(\x)-f(\x^+)\ge \frac{\|\nabla f(\x)\|^2}{2\beta(t_0)}$.
\end{enumerate}
\end{lemma}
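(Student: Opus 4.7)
The plan is to handle parts (1) and (2) together via a continuity/bootstrap argument along the steepest-descent ray, and then derive (3) by a single application of the quadratic upper bound \eqref{strconvond2}.

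If $\nabla f(\x)=\0$, then $\x=\x^\star$ and $\x^+=\x^a=\x$, so every claim is trivial; assume henceforth $\nabla f(\x)\ne\0$ and set $\y(t)=\x-t\nabla f(\x)$. The key observation is that \emph{whenever} $\y(t)\in V(t_0)$, convexity of $V(t_0)$ forces the entire segment $[\x,\y(t)]$ into $V(t_0)$, so \eqref{strconvond2} applies with $\y=\y(t)$ and produces
\begin{eqnarray*}
f(\y(t))\le f(\x)+\|\nabla f(\x)\|^2\,t\left(\frac{\beta(t_0)}{2}\,t-1\right).
\end{eqnarray*}
For $t\in(0,2/\beta(t_0))$ the right-hand side is strictly less than $f(\x)\le t_0$, so $\y(t)$ in fact lies in the open set $\{f<t_0\}$.

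Next, I would define $T^*=\sup\{T\ge 0:\y(t)\in V(t_0)\text{ for all }t\in[0,T]\}$. A direct computation using $\tfrac{d}{dt}f(\y(t))|_{t=0}=-\|\nabla f(\x)\|^2<0$ shows $T^*>0$ even when $\x$ sits on $\partial V(t_0)$. Closedness of $V(t_0)$ gives $\y(T^*)\in V(t_0)$; if $T^*<2/\beta(t_0)$, the displayed inequality forces $f(\y(T^*))<t_0$, placing $\y(T^*)$ in the interior of $V(t_0)$, so continuity of $\y(\cdot)$ extends the ray strictly past $T^*$, contradicting the definition. Hence $T^*\ge 2/\beta(t_0)$, which delivers $\x^a=\y(2/\beta(t_0))\in V(t_0)$, $[\x,\x^a]\subset V(t_0)$, and $f(\x^a)\le f(\x)$, establishing (1) and (2).

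Part (3) then falls out quickly. The left inequality $f(\x)-f(\x^\star)\ge f(\x)-f(\x^+)$ is merely $f(\x^+)\ge f(\x^\star)$, immediate from the global minimality of $\x^\star$. For the right inequality, note that $\x^+=\y(1/\beta(t_0))$ lies on the segment $[\x,\x^a]\subset V(t_0)$ established above, so \eqref{strconvond2} applies once more at $\y=\x^+$ and, after routine simplification of the quadratic in the step $1/\beta(t_0)$, yields $f(\x^+)\le f(\x)-\|\nabla f(\x)\|^2/(2\beta(t_0))$. The only delicate point in the whole proof is the bootstrap of the previous paragraph: \eqref{strconvond2} is available to us only on $V(t_0)$, yet it is being invoked to argue that the iterates stay inside; the resolution is to apply it solely at points already known to satisfy $\y(t)\in V(t_0)$ and then propagate the conclusion by strict descent plus continuity.
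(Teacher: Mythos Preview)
Your argument is correct, and part~(3) is handled exactly as in the paper. For (1)--(2) the paper takes a shorter route: it proves (1) first by a direct contradiction, assuming $f(\x^a)>f(\x)$ and using the intermediate value theorem on the strictly convex scalar function $h(t)=f(\x-t\nabla f(\x))$ to locate some $t_2\in(0,2/\beta(t_0))$ with $h(t_2)=f(\x)$; since $f(\y(t_2))=f(\x)\le t_0$ already places $\y(t_2)\in V(t_0)$, a single application of \eqref{strconvond2} at that point gives $f(\y(t_2))<f(\x)$, a contradiction. Part~(2) then follows from (1) in one line by convexity of $V(t_0)$. Your continuity/bootstrap on $T^*$ is slightly longer but buys two things: it never appeals to the strict convexity of $h$ (only to $h'(0)<0$ and continuity), and it keeps the domain restriction on \eqref{strconvond2} front and center rather than relying on the convenient fact that the IVT point automatically lands in $V(t_0)$.
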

\begin{proof}  (1) If $\nabla f(\x)=\0$, i.e., $\x=\x^\star$ the (1) trivially holds. Suppose that  $\nabla f(\x)\ne\0$ and assume to the contrary that $f(\x^a)>f(\x)$.  Let $h(t)=f(\x-t\nabla f(\x))$.  Then $h'(0)=-\|\nabla f(\x)\|^2$.  Recall that $h(t)$ is a strict convex function.  Hence there exists $t_1\in (0, \frac{2}{\beta(t_0)})$ such that $h'(t_1)=0$ and $h'(t)>0$ for $t>t_1$.  Thus there exists $t_2\in (t_1, \frac{2}{\beta(t_0)})$ such that $f(\y)=f(\x)$ for $\y=\x-t_2\nabla f(\x))$.  Note that $\y\in V(t_0)$.  This contradicts the inequality \eqref{strconvond2}.

\noindent (2)  As $f(\x^a)\le f(\x)\le t_0$ the convexity of $f$ yields that the interval $[\x,\x^a]$ is in $V(t_0)$.

\noindent(3) Clearly $\x^+=\frac{1}{2}(\x +\x^a)\in [\x,\x^a]$.  Hence 
\begin{eqnarray}  \label{upbetineq}&& f(\x^+)\le \\
&&f(\x)+\nabla f(\x)^\top (\x^+-\x)+\frac{\beta(t_0)}{2}\|\x^+-\x\|^2=f(\x)-\frac{\|\nabla f(\x)\|^2}{2\beta(t_0)}.\notag
\end{eqnarray} 
Therefore (3) holds.
\end{proof}
We now bring the following simple lemma which is basically in \cite{BLS15}:
\begin{lemma}\label{lem1}  Assume that $f\in\rC^2(\R^n)$ is strictly convex and $\x^\star$ is the unique minimum point of $f$.  Fix $\x\in V(t_0)$ and assume that $\x^\star\in \rB(\x, R_0^2)$.   Then we can choose $R_0=R(\x)$ and the following conditions hold:
\begin{eqnarray}\label{eq1}
 R(\x)^2= \frac{2}{\alpha(t_0)}(f(\x)-f(\x^\star))\le\frac{\|\nabla f(\x)\|^2}{\alpha^2(t_0)},\\
\label{eq2}
\x^\star\in \rB(\x^{++}, \frac{\|\nabla f(\x)\|^2}{\alpha^2(t_0)}-\frac{2}{\alpha(t_0)}(f(\x)-f(\x^\star))\subseteq\\
\notag
 \rB(\x^{++}, \frac{\|\nabla f(\x)\|^2}{\alpha^2(t_0)}(1-\frac{1}{\kappa(t_0)})-\frac{2}{\alpha(t_0)}(f(\x^+)-f(\x^\star)),\\
 \label{eq3}
 \frac{\|\nabla f(\x)\|}{\beta(t_0)}\le \|\x-\x^\star\|.
\end{eqnarray}
\end{lemma}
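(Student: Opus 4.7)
The plan is to derive all three assertions from the uniform Hessian bounds \eqref{strconvond1}--\eqref{strconvond2} on $V(t_0)$, combined with $\nabla f(\x^\star)=\0$ and the already-proved Lemma \ref{xflem}. The key observation that makes the quadratic inequalities available with $\y=\x^\star$ is that $V(t_0)$ is convex and both $\x$ and $\x^\star$ lie in it, so the segment $[\x,\x^\star]$ stays inside $V(t_0)$, where the Hessian has eigenvalues in $[\alpha(t_0),\beta(t_0)]$.

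For \eqref{eq1}, the equality defining $R(\x)^2$ is just the left half of \eqref{lowupbndf} rearranged. For the upper bound I would apply \eqref{strconvond1} with $\y=\x^\star$ and then bound the right-hand side from below by the unconstrained minimum (over $\y\in\R^n$) of the resulting quadratic, which is attained at $\y=\x^{++}$ with value $f(\x)-\frac{\|\nabla f(\x)\|^2}{2\alpha(t_0)}$. Hence
\[
f(\x^\star)\ge f(\x)-\frac{\|\nabla f(\x)\|^2}{2\alpha(t_0)},
\]
which rearranges to the stated upper bound in \eqref{eq1}.

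For the first inclusion in \eqref{eq2}, expand
\[
\|\x^{++}-\x^\star\|^2=\|\x-\x^\star\|^2-\frac{2}{\alpha(t_0)}\nabla f(\x)^\top(\x-\x^\star)+\frac{\|\nabla f(\x)\|^2}{\alpha^2(t_0)},
\]
and substitute the bound $\nabla f(\x)^\top(\x-\x^\star)\ge f(\x)-f(\x^\star)+\frac{\alpha(t_0)}{2}\|\x-\x^\star\|^2$ obtained by rearranging \eqref{strconvond1} at $\y=\x^\star$. The $\|\x-\x^\star\|^2$ contributions cancel exactly and the stated radius drops out. The nested inclusion then reduces, after cancelling the common terms $\frac{\|\nabla f(\x)\|^2}{\alpha^2(t_0)}$ and $\frac{2}{\alpha(t_0)}f(\x^\star)$ and clearing signs, to
\[
f(\x)-f(\x^+)\ge\frac{\|\nabla f(\x)\|^2}{2\beta(t_0)},
\]
which is precisely Lemma \ref{xflem}(3).

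Finally, \eqref{eq3} follows from $\nabla f(\x^\star)=\0$ together with the fundamental theorem of calculus applied along $[\x^\star,\x]\subset V(t_0)$:
\[
\nabla f(\x)=\int_0^1 H(f)\bigl(\x^\star+t(\x-\x^\star)\bigr)(\x-\x^\star)\,dt,
\]
so $\|\nabla f(\x)\|\le \beta(t_0)\|\x-\x^\star\|$ by the triangle inequality and the Hessian bound on the integration path. The only bookkeeping subtlety I anticipate is that $\x^{++}$ enters only as the formal minimizer of a convex quadratic, never as a point at which $f$ or $H(f)$ is evaluated subject to a $V(t_0)$-bound, so one need not verify whether $\x^{++}\in V(t_0)$. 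Once this point is noted, each claim is a one-line algebraic consequence of strong convexity and Lemma \ref{xflem}.
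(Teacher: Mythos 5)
Your proof is correct, and the central piece — the first inclusion of \eqref{eq2} via expanding $\|\x^{++}-\x^\star\|^2$ and substituting the strong-convexity bound at $\y=\x^\star$, then reducing the nested inclusion to Lemma \ref{xflem}(3) — coincides with the paper's argument. The two auxiliary pieces take mildly different routes. For the inequality in \eqref{eq1}, you derive $f(\x)-f(\x^\star)\le\frac{\|\nabla f(\x)\|^2}{2\alpha(t_0)}$ by minimizing the lower quadratic in \eqref{strconvond1} over all of $\R^n$ (a textbook Polyak--\L{}ojasiewicz-style estimate); the paper instead reads it off as the statement that the radius in the first ball of \eqref{eq2} is nonnegative, so the two facts come out in the opposite logical order. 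For \eqref{eq3}, you integrate $\nabla f$ along $[\x^\star,\x]\subset V(t_0)$ and bound the Hessian on the path, whereas the paper chains $\frac{\|\nabla f(\x)\|^2}{2\beta(t_0)}\le f(\x)-f(\x^+)\le f(\x)-f(\x^\star)\le \frac{\beta(t_0)}{2}\|\x-\x^\star\|^2$ using \eqref{upbetineq} and \eqref{lowupbndf}. Your FTC argument is arguably cleaner and more self-contained (it does not route through $\x^+$), while the paper's chaining reuses quantities it has already established. Your remark that $\x^{++}$ never needs to lie in $V(t_0)$ is a sound observation and applies to the paper's proof as well; note that $\x^+\in V(t_0)$ is nonetheless required for the second inclusion of \eqref{eq2}, but that was already secured in Lemma \ref{xflem}(2).
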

\begin{proof}  As $\x\in V(t_0)$ the left hand side of \eqref{lowupbndf} yields that $\x^\star\in\rB(\x,R(\x)^2)$, where $\R(\x)^2$ is given by \eqref{eq1}.  Clearly
\begin{eqnarray*}
\|\x^\star -\x^{++}\|^2=\|(\x^\star -\x+\frac{1}{\alpha(t_0)}\nabla f(\x)\|^2=\\
\|(\x^\star -\x\|^2 +
\frac{2}{\alpha(t_0)}\nabla f(\x)^\top (\x^\star-\x) +\frac{\|\nabla f(\x)\|^2}{\alpha^2(t_0)}.
\end{eqnarray*}
As $\x^\star,\x\in V(t_0)$ \eqref{strconvond1} yields:
\begin{eqnarray}\label{convcondxs}
f(\x^\star)\ge f(\x)+\nabla f(\x)\trans(\x^\star-\x)+\frac{\alpha(t_0)}{2}\|\x^\star - \x\|^2.
\end{eqnarray}
Thus
\begin{eqnarray*}
\|\x^\star-\x^{++}\|^2\le 
\frac{\|\nabla f(\x)\|^2}{\alpha^2(t_0)} -\frac{2}{\alpha(t_0)}(f(\x)-f(\x^\star)).
\end{eqnarray*}
This proves the first part of  \eqref{eq2}.  Hence the inequality in \eqref{eq1} holds.
Use part (3) of Lemma \ref{xflem}
to replace $f(\x)$ in the first part of \eqref{eq2} by a smaller quantity $f(\x^+)+\frac{\|\nabla f(\x)\|^2}{2\beta(t_0)}$ to obtain the second part of \eqref{eq2}.  

Combine \eqref{upbetineq} with \eqref{lowupbndf} to deduce 
\begin{eqnarray*}
\frac{\|\nabla f(\x)\|^2}{2\beta(t_0)}\le f(\x)-f(\x^+)\le f(\x)-f(\x^\star)\le \frac{\beta(t_0)}{2}\|\x-\x^\star\|^2. 
\end{eqnarray*}
This show the inequality \eqref{eq3}.  
\end{proof}
We now show that in our algorithm the sequences $\x_k,f(\x_k), k\in\N$ converge geometrically to $\x^\star,f(\x^\star)$ respectively:
\begin{theorem}\label{convalgo}  
Assume that $f\in\rC^{2}(\R^n)$ is a strict convex function which has a unique minimum point $\x^\star$.  Let $\x_0\in\R^n$ and  $\x_k,k\in\N$ be given by our algorithm.   Set $t_k=f(\x_k)$ for $k\in\Z_+$.  Assume that the eigenvalues of $H(f)(\x), \x\in V(t_k)$ are in the minimal interval $[\alpha(t_k),\beta(t_k)]$, where $0<\alpha(t_k) \le \beta(t_k)$. Denote $\kappa(t_k)=\frac{\beta(t_k)}{\alpha(t_k)}$.  
\begin{enumerate}
\item
If $\x_{k-1}\ne \x^\star$ for some $k\in\N$ then $t_{k-1}>t_k$.
\item The sequences $\{t_k\},\{\beta(t_k)\},\{-\alpha(t_k)\},\{\kappa(t_k)\}, k\in\Z_+$ are nonincreasing sequences which converge to $t^\star,\beta(t^\star),-\alpha(t^\star),\kappa(t^\star)$ respectively.
\item
For each $k\in\N$ the following inequalities hold:
\begin{eqnarray}\notag
&& f(\x_k)-f(\x^\star)\le\\ \label{convalgo1}
 && (f(\x_0)-f(\x^\star))(1-\frac{1}{d\kappa(t_0)})\prod_{i=1}^{k-1}(1-\frac{1}{(d-1)\kappa(t_i)})\le \\  \label{convalgo3}
 && \frac{\|\nabla f(\x_0)\|^2}{2\alpha(t_0)}(1-\frac{1}{d\kappa(t_0)})\prod_{i=1}^{k-1}(1-\frac{1}{(d-1)\kappa(t_i)})\le \\ \notag
&&\frac{\|\nabla f(\x_0)\|^2}{2\alpha(t_0)}(1-\frac{1}{d\kappa(t_0)})(1-\frac{1}{(d-1)\kappa(t_0)})^{k-1},\\
\label{convalgo2}
&&\|\x_k-\x^\star\|^2\le \frac{\|\nabla f(\x_0)\|^2}{\alpha(t_k)\alpha(t_0)}(1-\frac{1}{d\kappa(t_0)})\prod_{i=1}^{k-1}(1-\frac{1}{(d-1)\kappa(t_i)}).
\end{eqnarray}
\end{enumerate}
\end{theorem}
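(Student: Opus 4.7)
My plan is to combine a per-iteration sufficient-decrease inequality with the gradient-based lower bound on the suboptimality gap from Lemma \ref{lem1}. At iteration $k$, let $j=j(k)$ be the chosen block and embed the block-$j$ gradient into $\R^n$ as $\bg_j=(\0,\ldots,\nabla_j f(\x_k),\ldots,\0)^\top$. The map $s\mapsto f(\x_k-s\bg_j)$ is $\rC^2$ and strictly convex, with derivative $-\|\nabla_j f(\x_k)\|^2$ at $s=0$, and its second derivative is controlled by the eigenvalues of the $j$-th diagonal block of $H(f)$, which are bounded above by $\beta(t_k)$ on $V(t_k)$. Applying the one-dimensional version of Lemma \ref{xflem} in the direction $-\bg_j$, the segment $[\x_k,\x_k-(2/\beta(t_k))\bg_j]$ lies in $V(t_k)$ and the quadratic upper bound \eqref{strconvond2} gives
\[
f\bigl(\x_k-\tfrac{1}{\beta(t_k)}\bg_j\bigr)\le f(\x_k)-\frac{\|\nabla_j f(\x_k)\|^2}{2\beta(t_k)}.
\]
Since $\x_{k+1}$ is the partial minimum over all $\x_j$, a fortiori $f(\x_{k+1})\le f(\x_k)-\|\nabla_j f(\x_k)\|^2/(2\beta(t_k))$. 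This immediately proves (1): if $\x_k\ne\x^\star$ then $\nabla f(\x_k)\ne\0$, the maximizing $j$ has $\|\nabla_j f(\x_k)\|>0$, and the decrease is strict.

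The crucial observation for the factor $(d-1)$ is that for $k\ge 1$ the previous iteration produced $\nabla_{j(k-1)} f(\x_k)=\0$ (partial minimization zeros the gradient in that block), so at least one of the $d$ blocks of $\nabla f(\x_k)$ vanishes and the maximum over the remaining $d-1$ blocks satisfies $\|\nabla_{j(k)} f(\x_k)\|^2\ge\frac{1}{d-1}\|\nabla f(\x_k)\|^2$. For $k=0$ only the trivial bound with $\frac{1}{d}$ is available. Combining this with the inequality $f(\x)-f(\x^\star)\le\|\nabla f(\x)\|^2/(2\alpha(t))$, which is the second inequality of \eqref{eq1} in Lemma \ref{lem1} applied on $V(t)$, I obtain the recursion
\[
f(\x_{k+1})-f(\x^\star)\le \bigl(f(\x_k)-f(\x^\star)\bigr)\Bigl(1-\frac{1}{(d-1)\kappa(t_k)}\Bigr)\quad (k\ge 1),
\]
and the analogous inequality with $d$ in place of $d-1$ when $k=0$. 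Iterating gives \eqref{convalgo1}; bounding $f(\x_0)-f(\x^\star)$ above via \eqref{eq1} yields \eqref{convalgo3}.

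For the final simplification in \eqref{convalgo3} and for (2), I would use that $t_k$ is nonincreasing (from (1)), hence $V(t_k)\subseteq V(t_0)$; taking min/max of the Hessian eigenvalues on nested compact sets gives $\alpha(t_k)\ge\alpha(t_0)$, $\beta(t_k)\le\beta(t_0)$, and $\kappa(t_k)\le\kappa(t_0)$, so each factor $1-\frac{1}{(d-1)\kappa(t_i)}\le 1-\frac{1}{(d-1)\kappa(t_0)}$. The strict geometric decrease forces $t_k\to t^\star$, and continuity of the Hessian eigenvalues on the nested compact sublevel sets (whose intersection is $\{\x^\star\}$) gives the convergence of $\alpha(t_k),\beta(t_k),\kappa(t_k)$ to their values at $t^\star$. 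For \eqref{convalgo2}, the left inequality of \eqref{lowupbndf} applied on $V(t_k)$ gives $\|\x_k-\x^\star\|^2\le\frac{2}{\alpha(t_k)}(f(\x_k)-f(\x^\star))$, and multiplying \eqref{convalgo3} by $2/\alpha(t_k)$ produces the claimed bound.

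The main obstacle is justifying the one-step descent along a single block. Lemma \ref{xflem} is stated for the full gradient, but its proof rests only on the quadratic sandwich \eqref{strconvond1}--\eqref{strconvond2}, which applies verbatim to the one-dimensional restriction $s\mapsto f(\x_k-s\bg_j)$ because the Hessian of that restriction is $\nabla_j^2 f(\x_k-s\bg_j)$, a principal submatrix of $H(f)$ whose eigenvalues still lie in $[\alpha(t_k),\beta(t_k)]$. Once that technical point is handled, the rest is bookkeeping between the constants $\alpha(t_k),\beta(t_k)$ and the per-step descent.
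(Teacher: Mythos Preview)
Your proposal is correct and follows essentially the same route as the paper's proof: establish the per-step descent $f(\x_k)-f(\x_{k+1})\ge\|\nabla_j f(\x_k)\|^2/(2\beta(t_k))$ via Lemma~\ref{xflem}, combine it with the suboptimality bound $f(\x_k)-f(\x^\star)\le\|\nabla f(\x_k)\|^2/(2\alpha(t_k))$ from Lemma~\ref{lem1}, use the $d$-vs-$(d-1)$ observation, and iterate. The only cosmetic difference is that the paper applies Lemma~\ref{xflem} directly to the block-restricted function $g(\x_j)=f(\x_k^j,\x_j)$ on $\R^{m_j}$ (whose Hessian is the $j$-th diagonal block of $H(f)$, hence with eigenvalues in $[\alpha(t_k),\beta(t_k)]$), whereas you rerun the argument of Lemma~\ref{xflem} along the single line $s\mapsto\x_k-s\bg_j$ in $\R^n$; both yield the same inequality.
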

\begin{proof}  Note 
\begin{eqnarray*}
\|\nabla f(\x)\|^2=\sum_{l=1}^d \|\nabla_l f(\x)\|^2\Rightarrow
\max\{\|\nabla_l f(\x)\|, l\in[d]\}\ge \frac{\|\nabla f(\x)\|}{\sqrt{d}}.
\end{eqnarray*}
(1) Clearly if $\x_{k-1}=\x^\star$ then $\x_p=\x^\star$ for $p\ge k$.
Assume that $\x_{k-1}\ne \x^\star$.  Then $\|\nabla f(\x_{k-1})\|>0$.  
Let $j_{k-1}\in\argmax \{\|\nabla_l f(\x_{k-1})\|, l\in[d]\}$.  Then $\|\nabla_{j_{k-1}} f(\x_{k-1})\|>0$.  Hence $t_{k-1}>t_k$.

\noindent
(2) As $\{t_k\},k\in\Z_+$ is a nonincreasing sequence we deduce that $V(t_k)\subseteq V(t_{k-1})$ for $k\in\N$.  Hence the sequence $\{\alpha(t_k)\},k\in\N$ is a nonincreasing, and the sequences $\{\beta(t_k)\},k\in\N$ and $\{\kappa(t_k)\},k\in\N$
are nondecreasing.  The equality $\lim_{k\to\infty} t_k=t^\star$ follows from \eqref{convalgo1}.   The inequality \eqref{convalgo2} yields
\begin{eqnarray*}
\lim_{k\to\infty} \x_k=\x^\star,\lim_{k\to\infty}\alpha(t_k)=\alpha(t^\star),\lim_{k\to\infty}\beta(t_k)=\beta(t^\star),\lim_{k\to\infty}\kappa(t_k)=\kappa(t^\star).
\end{eqnarray*}

\noindent
(3) First we show the inequality \eqref{convalgo1} for $k=1$.  Assume that $j_0\in\argmax\{ \|\nabla_l f(\x_0)\|, l\in[d]\}$.  Hence $\|\nabla _{j_0} f(\x_0)\|\ge \frac{\|\nabla f(\x_0)\|}{\sqrt{d}}$.  Let $g(\x_{j_0})=f(\x_0^{j_0},\x_{j_0})$, where $\x_0=(\x_0^{j_0},\x_{j_{0},0})$.  Thus $g$ is a strictly convex function, whose Hessian is a submatrix of the Hessian of $f$.
Hence the eigenvalues of the Hessian of $g$ are also in the interval $[\alpha(t_0),\beta(t_0)]$.  Recall that $\argmin g=\x_{j_0}^\star=\x_j(\x_0^{j_0})$.  Then $\x_1=(\x_0^{j_0},\x_{j_0}^\star)$.  We now estimate from below $g(\x_{j_0,0})-g(\x_{j_0}^\star)$. The lower bound (3) of Lemma  \eqref{xflem} yields:
\begin{eqnarray*}
f(\x_0)-f(\x_1)=g(\x_{j_0,0})-g(\x_{j_0}^\star)\ge \\ \frac{\|\nabla g(\x_{j_0,0}\|^2}{2\beta(t_0)}= \frac{\|\nabla f_{j_0}(\x_0)\|^2}{2\beta(t_0)}\ge \frac{\|\nabla f(\x_0)\|^2}{2\beta (t_0)d}.
\end{eqnarray*}
The inequality \eqref{eq1} yields $f(\x_0)-f(\x^\star)\le \frac{\|\nabla f(x_0)\|^2}{2\alpha(t_0)}$. Assuming that $f(\x_0) >f(\x^\star)$ we obtain
\begin{eqnarray*}
&&\frac{f(\x_1)-f(\x^\star)}{f(\x_0)-f(\x^\star)}=1 - \frac{f(\x_0)-f(\x_1)}{f(\x_0)-f(\x^\star)}\le\\ 
&&1 -  \big(\frac{ \|\nabla f(\x_0)\|^2}{2\beta(t_0) d}\big)/\big(\frac{\|\nabla f(\x_0)\|^2} {2\alpha(t_0)}\big)=1-\frac{1}{d\kappa(t_0)}.
\end{eqnarray*}
This proves the first inequality in \eqref{convalgo1} for $k=1$.  

Assume now that $k=2$.  The definition of $\x_1$ yields that $\nabla_{j_0} f(\x_1)=0$.
Hence $\max\{\|\nabla_l f(\x_1)\|, l\in[d]\}\ge \frac{\|\nabla f(\x_1)\|}{\sqrt{d-1}}$.
Use the same arguments as above to show that $f(\x_2)-f(\x^\star)\le (f(\x_1)-f(\x^\star))(1-\frac{1}{(d-1)\kappa(t_1)})$. Hence \eqref{convalgo1} holds for $k=2$.  Similarly, the inequality  \eqref{convalgo1} holds for each $k\ge 2$.

Use the inequality \eqref{eq1} to deduce the inequality in \eqref{convalgo3}.
As $\kappa(t_k)\le \kappa(t_0)$ for each $k\in\N$ we deduce the inequality below \eqref{convalgo3}.
According to Lemma \ref{lem1} $\x^\star\in \rB(\x_k,R^2(\x_k))$.   Use \eqref{convalgo3} to deduce \eqref{convalgo2}.
\end{proof}

Observe that  our algorithm is an alternating algorithm for $d=2$ after the first step.
 \section{The tensor scaling problem}\label{sec:tenres}
 In this section we first recall briefly the results in \cite{Fri11} that we need.
 For positive integers $d,m_1,\ldots,m_d$
 denote by $\R^{m_1\times\ldots\times m_d}$ the linear space $d$-mode tensors
 $\cA=[a_{i_1,i_2,\ldots,i_{d}}], i_j\in [m_j], j\in[d]$.  Note that a $1$-mode tensor is a vector, and a $2$-mode tensor is a matrix.
 Assume that $d\ge 2$.  For a fixed $i_k\in[m_k]$ the $(d-1)$-mode tensor $[a_{i_1,\ldots,i_d}], i_j\in[m_j,j\in [d]\backslash
 \{k\}$ is called the $(k,i_k)$ \emph{slice} of $\cA$.  
 For $d=2$ the $(1,i)$ slice and the $(2,j)$ slice are the $i-th$ row and the $j-th$ column of a given matrix.  In the rest of the paper we assume:
 \begin{eqnarray}\label{admjas}
 d\ge 2, \quad m_j\ge 2 \textrm{ for } j\in[d].
 \end{eqnarray}
 Let
 \begin{equation}\label{kikslicesum}
 s_{k,i_k}:=\sum_{i_j\in[m_j],j\in [d]\backslash\{k\}} a_{i_1,\ldots,i_d}, \; i_k\in[m_k],k\in[d]
 \end{equation}
 be the $(k,i_k)$-slice sum.  Denote
 \begin{equation}\label{kikslsumvec}
 \s_k:=(s_{k,1}, \ldots,s_{k,m_k})\trans, \quad k\in[d]
 \end{equation}
 the $k$-slice sum.  Note that $k$-slice sums satisfy the compatibility conditions
 \begin{equation}\label{compslccon}
 \sum_{i_1=1}^{m_1} s_{1,i_1}=\ldots=\sum_{i_d=1}^{m_d} s_{d,i_d}.
 \end{equation}

 Two $d$-mode tensors $\cA=[a_{i_1,i_2,\ldots,i_{d}}],\cB=[b_{i_1,i_2,\ldots,i_{d}}]\in
 \R^{m_1\times\ldots\times m_d}$
 are called \emph{positive diagonally} equivalent if there exist $\x_k=(x_{k,1},\ldots,
 x_{k,m_k})\trans\in\R^{m_k}, k\in[d]$
 such that
 $a_{i_1,\ldots,i_d}=e^{x_{1,i_1}+\ldots+x_{d,i_d}}b_{i_1,\ldots,i_d}$ for all $i_j\in[m_j]$
 and $j\in[d]$.
 Denote by  $\R_+^{m_1\times\ldots\times m_d}$ the cone of nonnegative,(entrywise),  $d$-mode tensors.

 We assume that $\cB=[b_{i_1,i_2,\ldots,i_{d}}]\in
 \R_+^{m_1\times \ldots\times m_d}$ is a given nonnegative tensor with no zero slice $(k,i_k)$.
 Let $\s_k\in \R_+^{m_k}, k\in[d]$ are given $k$ positive vectors satisfying the conditions
 (\ref{compslccon}).
 Denote by $\R_+^{m_1\times \ldots\times m_d}(\cB, \s_1,\ldots,\s_d)$ the set of all nonnegative
 $\cA=[a_{i_1,i_2,\ldots,i_{d}}]\in
 \R_+^{m_1\times \ldots m_d}$ having the same zero pattern as $\cB$, i.e. $a_{i_1,\ldots,i_d}=0
 \iff b_{i_1,\ldots,i_d}=0$
 for all indices $i_1,\ldots,i_d$, and satisfying the condition (\ref{kikslicesum}).
 We now recall the necessary and sufficient conditions on $\cB$ so that
 $\R_+^{m_1\times \ldots m_d}(\cB, \s_1,\ldots,\s_d)$
 contains a tensor $\cA$, which is positively diagonally equivalent to $\cB$.  For matrices, i.e. $d=2$,
 this problem was solved by
 Menon \cite{Men68} and Brualdi \cite{Bru68}.  See also \cite{MS69}.  For the special case of positive
 diagonal equivalence
 to doubly stochastic matrices see \cite{BPS66} and \cite{SK67}.
 The result of Menon was extended for tensors independently by Bapat-Raghavan \cite{BR89} and
 Franklin-Lorenz \cite{FL89}.  (See \cite{Bap82} and \cite{Rag84} for the special case where all the
 entries of $\cB$ are positive.)  In \cite{Fri11} we gave necessary and sufficient
 conditions for the solution of this problem:
 \begin{theorem}\label{maintheo}  Let $\cB=[b_{i_1,i_2,\ldots,i_{d}}]\in\R_+^{m_1\times\ldots\times m_d}$,
 $(d\ge 2)$, be a
 given nonnegative tensor with no $(k,i_k)$-zero slice.  Let $\s_k\in\R_+^{m_k}, k=1,\ldots,d$ be given
 positive vectors satisfying (\ref{compslccon}).
 Then there exists a nonnegative tensor $\cA\in \R_+^{m_1\times\ldots\times m_d}$, which is positive
 diagonally equivalent
 to $\cB$ and having each $(k,i_k)$-slice sum equal to $s_{k,i_k}$, if and only the following conditions  hold:
 The system of the inequalities and equalities for $\x_k=(x_{k,1},\ldots,\x_{k,m_k})
 \trans\in\R^{m_k}, k=1,\ldots,d$,
 \begin{eqnarray}\label{ineqcond}
 x_{1,i_1}+x_{2,i_2}+\ldots+x_{d,i_d}\le 0 \textrm{ if } b_{i_1,i_2,\ldots,i_d}>0,\\
 \label{eqcond}
 \s_k\trans \x_k=0 \textrm{ for } k=1,\ldots,d,
 \end{eqnarray}
 imply one of the following equivalent conditions
 \begin{enumerate}
 \item\label{eqcond1}  $x_{1,i_1}+x_{2,i_2}+\ldots+x_{d,i_d}= 0$ if $b_{i_1,i_2,\ldots,i_d}>0$.
 \item\label{eqcond2}
 $\sum_{b_{i_1,i_2,\ldots,i_d}>0} x_{1,i_1}+x_{2,i_2}+\ldots+x_{d,i_d}=0$.
\end{enumerate}
In particular, there exists at most one tensor $\cA\in \R_+^{m_1\times\ldots\times m_d}$ with $(k,i_k)$-slice
 sum $s_{k,i_k}$ for all $k,i_k$,  which is positive diagonally equivalent to $\cB$.
 \end{theorem}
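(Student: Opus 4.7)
My approach is to reduce the scaling problem to a convex minimization. Set
$$f(\x):=\sum_{b_{i_1,\ldots,i_d}>0}b_{i_1,\ldots,i_d}\exp\!\Bigl(\sum_{k=1}^{d} x_{k,i_k}\Bigr),\qquad \x=(\x_1,\ldots,\x_d),$$
a convex function on $\R^{m_1+\cdots+m_d}$ whose partial derivative with respect to $x_{k,i_k}$ equals exactly the $(k,i_k)$-slice sum of the scaled tensor $\cA(\x):=[b_{i_1,\ldots,i_d}\,e^{\sum_k x_{k,i_k}}]$. The compatibility condition (\ref{compslccon}) reflects a $d$-parameter translation invariance of $f$, so the natural normalization is to restrict $f$ to
$$L:=\{\x:\s_k^\top\x_k=0,\ k\in[d]\}.$$
At a critical point of $f|_L$, Lagrange multipliers yield slice sums of the form $(1+\mu_k)s_{k,i_k}$; the compatibility identity forces all $1+\mu_k$ to equal a common $\lambda>0$, and a final uniform shift of the $\x_k$'s produces the desired $\cA$ with slice sums exactly $\s_k$. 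Existence of a scaling is therefore equivalent to $f|_L$ attaining its infimum.

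\emph{Necessity.} Assume such $\cA$ exists and let $\x$ satisfy (\ref{ineqcond}) and (\ref{eqcond}). Using $s_{k,i_k}=\sum_{i_j,\,j\ne k}a_{i_1,\ldots,i_d}$ to rewrite each $\s_k^\top\x_k$, I compute
$$0=\sum_{k=1}^d\s_k^\top\x_k=\sum_{i_1,\ldots,i_d}a_{i_1,\ldots,i_d}\bigl(x_{1,i_1}+\cdots+x_{d,i_d}\bigr).$$
Each summand has $a_{i_1,\ldots,i_d}>0\iff b_{i_1,\ldots,i_d}>0$, and the corresponding bracket is $\le 0$ by (\ref{ineqcond}); the identity then forces each such bracket to vanish, which is condition~(1).

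\emph{Sufficiency.} The task is to show that $f|_L$ attains its infimum, after which the Lagrange/compatibility argument above closes the proof. Let
$$N:=\bigl\{\et:\ \eta_{1,i_1}+\cdots+\eta_{d,i_d}=0\text{ whenever }b_{i_1,\ldots,i_d}>0\bigr\},$$
so that $f$ is constant along translations in $N$. It suffices to prove that $f$ is coercive on the complementary subspace $L\cap (L\cap N)^\perp$. Suppose not: take $\x^{(n)}$ in this subspace with $\|\x^{(n)}\|\to\infty$ and $f(\x^{(n)})$ bounded. Normalize $\et^{(n)}:=\x^{(n)}/\|\x^{(n)}\|$ and extract a subsequential limit $\et$ with $\|\et\|=1$; boundedness of each exponential in $f(\x^{(n)})$ forces $\eta_{1,i_1}+\cdots+\eta_{d,i_d}\le 0$ for every tuple with $b_{i_1,\ldots,i_d}>0$, while closedness gives $\et\in L\cap(L\cap N)^\perp$, so in particular $\s_k^\top\et_k=0$ for all $k$. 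The hypothesis of the theorem now applies to $\et$, upgrading the inequalities to equalities; thus $\et\in N$, and combining with $\et\in L$ and $\et\perp(L\cap N)$ yields $\et=0$, contradicting $\|\et\|=1$.

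\emph{Uniqueness and main obstacle.} If two scalings $\cA,\cA'$ exist, their exponent vectors (suitably normalized to lie in $L$) are both minimizers of $f|_L$; convexity forces the segment between them to be contained in a level set of $f$, whence their difference lies in $N$, which means $\cA=\cA'$ entrywise. The main difficulty I anticipate is the careful bookkeeping around the degeneracy subspace $L\cap N$: the hypothesis of the theorem is \emph{precisely} what is needed to make the coercivity step work, and one must thread it through without confusing ``proportional to $\s_k$'' with ``equal to $\s_k$'', nor the global translation invariance of $f$ with the intrinsic degeneracy direction $N$. Once this bookkeeping is settled, each individual step reduces to a short convexity or compactness argument.
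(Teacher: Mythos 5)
Your proposal is correct and follows essentially the same route that the paper outlines (citing \cite{Fri11}): reduce to minimization of the log-moment-generating-type function $\hat f$ on the subspace $\U(\s_1,\ldots,\s_d)$, identify the degeneracy directions $\V_0(\s_1,\ldots,\s_d)$ (your $L\cap N$), and prove coercivity on the complementary subspace $\V_0^\perp$ via a normalize-and-extract-a-limit argument, exactly as encoded in the paper's Lemmas \ref{critptf}, \ref{strictconvf} and Theorem \ref{eqivcondscal}. The necessity computation $0=\sum_k\s_k^\top\x_k=\sum a_{i_1,\ldots,i_d}(x_{1,i_1}+\cdots+x_{d,i_d})$ and the Lagrange-multiplier / compatibility argument producing a common scalar $\lambda$ are both sound; the only place that deserves a touch more care is the normalization ``to lie in $L$'' in the uniqueness step, which works after shifting by $\z_k=c_k\1_{m_k}$ with $c_k=-\s_k^\top\x_k/(\s_k^\top\1_{m_k})$, yielding slice sums $\mu\s_k$ rather than $\s_k$ and then recovering $\mu=\mu'$ by comparing total sums.
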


 The above yields the following corollary.
 \begin{corollary}\label{mentens}  Let $\cB\in\R_+^{m_1\times\ldots\times m_d}$, $(d\ge 2)$, be a
 given nonnegative tensor with no $(k,i_k)$-zero slice.  Let $\s_k\in\R_+^{m_k}, k=1,\ldots,d$ be given
 positive vectors.  Then there exists a nonnegative tensor $\cC\in \R_+^{m_1\times\ldots\times m_d}$,
 which is positive diagonally equivalent
 to $\cB$ and each $(k,i_k)$-sum slice equal to $s_{k,i_k}$, if and only if there exists a nonnegative tensor
 $\cA=[a_{i_1,i_2,\ldots,i_{d}}]\in\R_+^{m_1\times\ldots\times m_d}$,
 having the same zero pattern as $\cB$, which satisfies (\ref{kikslicesum}).
 \end{corollary}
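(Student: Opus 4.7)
The plan is to derive this corollary from Theorem \ref{maintheo}. The forward implication is immediate: a tensor $\cC$ that is positive diagonally equivalent to $\cB$ has the same zero pattern as $\cB$ because every entry is multiplied by a strictly positive scalar, so the hypothesized $\cC$ itself serves as the required $\cA$.

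For the nontrivial direction, assume such an $\cA$ exists; I will verify the hypothesis of Theorem \ref{maintheo}, namely that the system (\ref{ineqcond})--(\ref{eqcond}) forces the equality condition (\ref{eqcond1}). Fix $\x_1,\ldots,\x_d$ satisfying (\ref{ineqcond}) and (\ref{eqcond}). The key idea is to pair the $\x_k$'s against $\cA$ through the double sum
\begin{equation*}
T := \sum_{i_1,\ldots,i_d} a_{i_1,\ldots,i_d}\bigl(x_{1,i_1}+\cdots+x_{d,i_d}\bigr).
\end{equation*}
On the one hand, regrouping by the index $k$ and pushing the inner summation past $x_{k,i_k}$ turns $T$ into $\sum_{k=1}^{d} \s_k^{\top}\x_k$, which vanishes by (\ref{eqcond}), since $\cA$ has precisely the prescribed $(k,i_k)$-slice sums $s_{k,i_k}$. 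On the other hand, every summand of $T$ is nonpositive: $a_{i_1,\ldots,i_d}\ge 0$, and whenever $a_{i_1,\ldots,i_d}>0$ the shared zero pattern gives $b_{i_1,\ldots,i_d}>0$, so (\ref{ineqcond}) forces $x_{1,i_1}+\cdots+x_{d,i_d}\le 0$.

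Since a finite sum of nonpositive reals equal to zero must vanish term by term, each factor $x_{1,i_1}+\cdots+x_{d,i_d}$ must be zero whenever $a_{i_1,\ldots,i_d}>0$, equivalently whenever $b_{i_1,\ldots,i_d}>0$. This is exactly condition (\ref{eqcond1}), and Theorem \ref{maintheo} then produces the desired $\cC$.

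There is no substantial obstacle here; the content of the argument is the recognition that $\cA$ functions as a witness whose slice sums translate the linear constraint (\ref{eqcond}) into a weighted sum supported on the nonzero entries of $\cB$. Once the interchange of summation and the sign analysis are in place, Theorem \ref{maintheo} does all the remaining work.
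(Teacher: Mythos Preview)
Your proof is correct and follows exactly the route the paper intends: the paper does not spell out a proof but simply states that ``the above yields'' Corollary~\ref{mentens}, meaning it follows from Theorem~\ref{maintheo}. Your argument supplies precisely the missing verification --- pairing the inequalities (\ref{ineqcond}) against the witness tensor $\cA$ via the weighted sum $T=\sum a_{i_1,\ldots,i_d}(x_{1,i_1}+\cdots+x_{d,i_d})$, computing $T=\sum_k\s_k^\top\x_k=0$ from the slice sums, and then forcing termwise equality --- which is the natural (and standard) way to pass from the combinatorial feasibility condition to the implication required by Theorem~\ref{maintheo}.
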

 For matrices, i.e. $d=2$, the above corollary is due Menon \cite{Men68}.
 For $d= 3$ this result is due to \cite[Thm 3]{BR89} and for $d\ge 3$ \cite{FL89}.
 Brualdi in \cite{Bru68} gave a nice and simple characterization for the set of nonnegative matrices, with
 prescribed zero pattern and with given positive row and column sums, to be not empty.
 It is an open problem to find an analog of Brualdi's results for $d$-mode tensors, where $d\ge 3$.

 Note that the conditions of Theorem \ref{maintheo}  are stated as a
 linear programming problem.  Hence the existence of a positive diagonally equivalent tensor $\cA$ can
 be determined in polynomial time \cite{Kar,Kha}.  If such $\cA$ exists,  it is shown in \cite{Fri11} that $\cA$ can be found
 by computing the unique minimal point of certain strictly convex functions $f$.
 Note that $\cB>0$ is always scalable as the tensor $\cA=b\s_{1}\otimes\cdots\otimes\s_d$ satisfies (\ref{kikslicesum}) for $b=(\1_{m_1}\trans\s_1)^{d-1}$.

 Identify  $\R^{m_1}\times\R^{m_2}\times\ldots\times\R^{m_d}$ with $\R^{n+d}$, where $n+d=\sum_{k=1}^d m_k$.
 We view $\x\in\R^{n+d}$ as a vector $(\x_1\trans,\ldots,\x_d\trans)\trans=(\x_1,\ldots,\x_d)$, where $\x_k\in\R^{m_k}, k\in[d]$.
 Let $\|\x\|:=\sqrt{\x\trans\x}$.  Define
 \begin{equation}\label{deffunct}
 \hat f(\x)=\hat f(\x_1,\ldots,\x_d):=\sum_{i_j\in[m_j],j\in[d]}
 b_{i_1,\ldots,i_d}e^{x_{1,i_1}+\ldots+x_{d,i_d}}.
 \end{equation}
 Clearly, $\hat f$ is a convex function on $\R^{n+d}$.
 Denote by $\U(\s_1,\ldots,\s_d)\subset \R^{n+d}$ the subspace of vectors  $(\x_1,\ldots,\x_d)$ satisfying the equalities (\ref{eqcond}).  Thus $\U(\s_1,\ldots,\s_d)=\rL(\s_1)\times\cdots\times \rL(\s_d)\equiv \R^n$. 
 In \cite{Fri11} we showed the following lemma:
 \begin{lemma}\label{critptf}  Let $\cB=[b_{i_1,i_2,\ldots,i_{d}}]\in\R_+^{m_1\times\ldots\times m_d}$,
 $(d\ge 2)$, be a given nonnegative tensor with no $(k,i_k)$-zero slice.  Let $\s_k\in\R_+^{m_k}, k=1,\ldots,d$
 be given positive vectors satisfying (\ref{compslccon}).  Then there exists a nonnegative tensor
 $\cA\in \R_+^{m_1\times\ldots\times m_d}$,
 which is positive diagonally equivalent to $\cB$ and having each $(k,i_k)$-slice sum equal to $s_{k,i_k}$, if
 and only the restriction of $\hat f$ to the subspace $\U(\s_1,\ldots,\s_d)$, denoted as $\tilde f$, has a critical point.
 \end{lemma}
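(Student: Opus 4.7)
The plan is to reduce the statement to a Lagrange multiplier computation, exploiting the fact that the gradient of $\hat f$ at a point $\x$ is literally the tuple of slice sums of the candidate scaled tensor $\cA(\x) = [b_{i_1,\ldots,i_d}\,e^{x_{1,i_1}+\ldots+x_{d,i_d}}]$. Explicitly,
\begin{equation*}
\frac{\partial \hat f}{\partial x_{k,i_k}}(\x) \;=\; \sum_{i_j,\,j\in[d]\setminus\{k\}} b_{i_1,\ldots,i_d}\,e^{x_{1,i_1}+\ldots+x_{d,i_d}},
\end{equation*}
i.e., $\nabla_{\x_k}\hat f(\x)$ is the $k$-slice sum vector of $\cA(\x)$. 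Since the constraint set $\U(\s_1,\ldots,\s_d)$ is the Cartesian product of the hyperplanes $\rL(\s_k)$, the Lagrange condition for $\x^\star$ to be a critical point of $\tilde f$ is that $\nabla_{\x_k}\hat f(\x^\star) = \lambda_k \s_k$ for some scalars $\lambda_1,\ldots,\lambda_d$.

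For the ``if'' direction, I would start from a critical point $\x^\star\in\U$ and take the inner product of the Lagrange equation with $\1_{m_k}$. The left side equals the total entry sum $T$ of $\cA(\x^\star)$, while the right side equals $\lambda_k (\1_{m_k}^\top\s_k)$. By the compatibility condition (\ref{compslccon}), $\1_{m_k}^\top\s_k$ is the same positive number $S$ for every $k$, forcing $\lambda_k = T/S =: \lambda$, and $\lambda>0$ because $\cB$ has no zero slice. To kill the common factor $\lambda$, shift just one group of coordinates, say set $\y_1 = \x_1^\star - (\log\lambda)\1_{m_1}$ and $\y_k = \x_k^\star$ for $k\ge 2$; then $\cA(\y)$ is positively diagonally equivalent to $\cB$ and has exactly the prescribed slice sums $s_{k,i_k}$.

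For the ``only if'' direction, given a positively diagonally equivalent $\cA=\cA(\x)$ with slice sums $s_{k,i_k}$, I would project $\x$ onto $\U$ by scalar shifts within each block: set $c_k = -\s_k^\top\x_k/S$ and $\x_k' = \x_k + c_k\1_{m_k}$, so that $\s_k^\top\x_k'=0$ for every $k$. The effect on $\cA$ is uniform multiplication by $\mu := \exp(c_1+\ldots+c_d)$, hence $\nabla_{\x_k}\hat f(\x') = \mu\,\s_k$, which is a common scalar multiple of $\s_k$; this is precisely the Lagrange condition, so $\x'$ is a critical point of $\tilde f$.

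I expect no serious obstacle; the only point requiring care is the bookkeeping showing that the per-block Lagrange multipliers must all coincide, which is exactly where the compatibility condition (\ref{compslccon}) enters. Once that is in place, both directions are a one-line rescaling argument, and no convexity or existence machinery is needed for this lemma (that would enter only when one proves the critical point actually exists, which is the subject of Theorem \ref{maintheo} rather than of this lemma).
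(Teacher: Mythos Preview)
The paper does not actually prove this lemma here; it is quoted from \cite{Fri11} without proof, so there is no in-paper argument to compare against. Your Lagrange-multiplier computation is correct and is exactly the standard argument one would expect: the gradient of $\hat f$ encodes the slice sums of $\cA(\x)$, the normals to $\U(\s_1,\ldots,\s_d)$ are the block vectors $(\0,\ldots,\s_k,\ldots,\0)$, and the compatibility condition \eqref{compslccon} forces all the per-block multipliers to agree. Both directions then reduce to a scalar shift in one or all blocks, as you describe. The only cosmetic point is that in the ``if'' direction the resulting $\y$ need not lie in $\U$, but the lemma only asserts existence of some positively diagonally equivalent $\cA$ with the prescribed slice sums, so this is irrelevant.
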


 Denote by $\V(\s_1,\ldots,\s_d)$ the subspace of all vectors
 $(\x_1,\ldots,\x_d)$
 satisfying the condition \emph{\ref{eqcond1}} of Theorem \ref{maintheo}.
 Clearly, for each $\x\in\R^{n+d}$ the function $\hat f$ has a constant value $\hat f(\x)$ on the affine set $\x+
 \V(\s_1,\ldots,\s_d)$. Let $\V_0(\s_1,\ldots,\s_d)=\V(\s_1,\ldots,\s_d)\cap\U(\s_1,\ldots,\s_d)$  Hence, if $\et\in\U(\s_1,\ldots,\s_d)$ is a critical point of $\tilde f$
 then any point in $\et+\V_0(\s_1,\ldots,\s_d)$ is also a critical of  $\tilde f$. 
 Denote by $\V(\s_1,\ldots,\s_d)^{\perp}\subset \R^{n+d}$ the orthogonal complement of $\V(\s_1,\ldots,\s_d)$ in $\R^{n+d}$, and by
 $\V_0(\s_1,\ldots,\s_d)^{\perp}$, the orthogonal complement of
 $\V_0(\s_1,\ldots,\s_d)$ in $\U(\s_1,\ldots,\s_d)$.  In \cite{Fri11} we showed:
 \begin{lemma}\label{strictconvf}  Let $\cB=[b_{i_1,i_2,\ldots,i_{d}}]\in\R_+^{m_1\times\ldots\times m_d}$,
 $(d\ge 2)$, be a given nonnegative tensor with no $(k,i_k)$-zero slice.  Let $\s_k\in\R_+^{m_k}, k=1,\ldots,d$
 be given positive vectors satisfying (\ref{compslccon}).  Let $\U(\s_1,\ldots,\s_d),\V_0(\s_1,\ldots,\s_d),
 \V_0(\s_1,\ldots,\s_d)^{\perp}$ be defined as above.
 Then the restriction of $\tilde f$ to $\V_0(\s_1,\ldots,\s_d)^{\perp}$, denoted as $f$, is strictly convex.  That is, $H(f)$
 has positive eigenvalues at each point of  $\V_0(\s_1,\ldots,\s_d)^{\perp}$.
 \end{lemma}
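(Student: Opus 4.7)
The plan is to compute $H(\hat f)$ explicitly, identify its kernel with the subspace $\V(\s_1,\ldots,\s_d)$ already appearing in Theorem \ref{maintheo}, and then pass through two successive restrictions (first to $\U(\s_1,\ldots,\s_d)$ to get $\tilde f$, then to $\V_0(\s_1,\ldots,\s_d)^\perp$ to get $f$) while tracking how the kernel shrinks at each stage.

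First I would write the second derivative of $\hat f$ at a fixed point $\x$ in the compact form
\begin{equation*}
\y^\top H(\hat f)(\x)\,\y \;=\; \sum_{i_1,\ldots,i_d} b_{i_1,\ldots,i_d}\,e^{x_{1,i_1}+\ldots+x_{d,i_d}}\,(y_{1,i_1}+y_{2,i_2}+\ldots+y_{d,i_d})^2,
\end{equation*}
valid for every $\y=(\y_1,\ldots,\y_d)\in\R^{n+d}$. Since each exponential and each $b_{i_1,\ldots,i_d}$ is nonnegative, $H(\hat f)(\x)$ is positive semidefinite. Moreover, $\y^\top H(\hat f)(\x)\,\y=0$ holds if and only if $y_{1,i_1}+\ldots+y_{d,i_d}=0$ for every tuple $(i_1,\ldots,i_d)$ with $b_{i_1,\ldots,i_d}>0$, which is precisely condition \emph{(1)} of Theorem \ref{maintheo}. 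Thus the kernel of $H(\hat f)(\x)$ equals $\V(\s_1,\ldots,\s_d)$, independently of the choice of $\x$.

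Next I would use the standard fact that the Hessian of a restriction equals the compression of the Hessian to the tangent space. Applied to $\tilde f=\hat f|_{\U(\s_1,\ldots,\s_d)}$, this gives, for $\y\in\U(\s_1,\ldots,\s_d)$,
\begin{equation*}
\y^\top H(\tilde f)(\x)\,\y \;=\; \y^\top H(\hat f)(\x)\,\y,
\end{equation*}
so the kernel of $H(\tilde f)(\x)$ inside $\U(\s_1,\ldots,\s_d)$ is $\V(\s_1,\ldots,\s_d)\cap \U(\s_1,\ldots,\s_d)=\V_0(\s_1,\ldots,\s_d)$, again independently of $\x$. Restricting once more to $f=\tilde f|_{\V_0(\s_1,\ldots,\s_d)^\perp}$, for any $\y\in\V_0(\s_1,\ldots,\s_d)^\perp$ the equality $\y^\top H(f)(\x)\,\y=0$ forces $\y\in\V_0(\s_1,\ldots,\s_d)$, and combined with $\y\in\V_0(\s_1,\ldots,\s_d)^\perp$ it yields $\y=\0$. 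Hence $H(f)(\x)$ is positive definite at every point of $\V_0(\s_1,\ldots,\s_d)^\perp$, which is the claim.

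I do not expect a real obstacle here; the only bookkeeping subtlety is to state carefully that $\V_0(\s_1,\ldots,\s_d)^\perp$ means the orthogonal complement taken \emph{inside} $\U(\s_1,\ldots,\s_d)$ (as defined in the excerpt), so that the tangent space at every point is $\V_0(\s_1,\ldots,\s_d)^\perp$ itself, and the kernel computation above is legitimate. No quantitative bound on the eigenvalues is required for this lemma; only positivity is asserted.
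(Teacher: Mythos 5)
Your argument is correct. Note that the paper itself does not prove Lemma~\ref{strictconvf}: it is stated with the remark ``In \cite{Fri11} we showed,'' so there is no in-paper proof to compare against. Your computation is the natural and, as far as I can tell, the standard one: writing
\[
\y^\top H(\hat f)(\x)\,\y \;=\; \sum_{i_1,\ldots,i_d} b_{i_1,\ldots,i_d}\,e^{x_{1,i_1}+\cdots+x_{d,i_d}}\,\bigl(y_{1,i_1}+\cdots+y_{d,i_d}\bigr)^2
\]
immediately gives positive semidefiniteness and identifies the kernel (for a PSD form, $\y^\top H\y=0 \iff H\y=\0$) as exactly the subspace defined by condition (1) of Theorem~\ref{maintheo}, i.e.\ $\V(\s_1,\ldots,\s_d)$, independently of $\x$. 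Restricting to $\U(\s_1,\ldots,\s_d)$ intersects this kernel with $\U$, giving $\V_0(\s_1,\ldots,\s_d)$, and a further restriction to $\V_0(\s_1,\ldots,\s_d)^\perp$ (taken inside $\U$, as you correctly flag) leaves a trivial kernel, hence positive definiteness. The only thing worth stating even more explicitly is the well-known fact you invoke implicitly: for a positive semidefinite quadratic form $Q$, $Q(\y)=0$ implies $\y\in\ker Q$, so the ``kernel of the restricted form'' really is the intersection of the subspace with $\ker Q$. With that sentence added, the proof is complete and self-contained.
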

 \begin{theorem}\label{eqivcondscal}  Let $\cB=[b_{i_1,i_2,\ldots,i_{d}}]\in\R_+^{m_1\times\ldots\times m_d}$,
 $(d\ge 2)$, be a given nonnegative tensor with no $(k,i_k)$-zero slice.  Let $\s_k\in\R_+^{m_k}, k=1,\ldots,d$
 be given positive vectors satisfying (\ref{compslccon}).  Then the following conditions are equivalent.
 \begin{enumerate}
 \item\label{eqivcondscal1}  $\tilde f$ has a global minimum.
 \item\label{eqivcondscal2}  $\tilde f$ has a critical point.
 \item\label{eqivcondscal3}  $\lim f(\x_l)=\infty$ for any sequence $\x_l\in\V_0(\s_1,\ldots,\s_k)^{\perp}$
 such that $\lim\|\x_l\|= \infty$.
 \item\label{eqivcondscal4}  The only $\x=(\x_1,\ldots,\x_d)\in\V_0(\s_1,\ldots,\s_k)^{\perp}$
 that satisfies (\ref{ineqcond}) is $\x=\0_n$.

 \end{enumerate}
 \end{theorem}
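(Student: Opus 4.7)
The plan is to reduce $\tilde f$ on $\U(\s_1,\ldots,\s_d)$ to the strictly convex function $f$ on $\V_0(\s_1,\ldots,\s_d)^\perp$ provided by Lemma~\ref{strictconvf}, combine this with Lemma~\ref{existminlem}, and handle the geometric conditions (\ref{eqivcondscal4}) via a normalization/compactness argument on the unit sphere of $\V_0(\s_1,\ldots,\s_d)^\perp$. Throughout I use the orthogonal decomposition $\U(\s_1,\ldots,\s_d) = \V_0(\s_1,\ldots,\s_d)^\perp \oplus \V_0(\s_1,\ldots,\s_d)$.

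First I would observe that $\hat f$ is constant on each translate of $\V(\s_1,\ldots,\s_d)$, so $\tilde f$ is constant on each translate of $\V_0(\s_1,\ldots,\s_d)$ inside $\U(\s_1,\ldots,\s_d)$; writing $\x = \x_\perp + \x_0$ with $\x_\perp \in \V_0(\s_1,\ldots,\s_d)^\perp$ and $\x_0 \in \V_0(\s_1,\ldots,\s_d)$ gives $\tilde f(\x) = f(\x_\perp)$. Hence the global minima (respectively critical points) of $\tilde f$ on $\U(\s_1,\ldots,\s_d)$ are precisely the preimages, under the orthogonal projection onto $\V_0(\s_1,\ldots,\s_d)^\perp$, of the global minima (respectively critical points) of $f$. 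Since $f$ is strictly convex, any critical point $\x^\star$ of $f$ satisfies $f(\y) \ge f(\x^\star) + \nabla f(\x^\star)^\top(\y - \x^\star) = f(\x^\star)$, so it is automatically a global minimum. This yields (\ref{eqivcondscal1}) $\Leftrightarrow$ (\ref{eqivcondscal2}). The equivalence (\ref{eqivcondscal1}) $\Leftrightarrow$ (\ref{eqivcondscal3}) then follows by applying Lemma~\ref{existminlem} to the strictly convex $\rC^2$ function $f$ on the finite-dimensional subspace $\V_0(\s_1,\ldots,\s_d)^\perp$, identified with $\R^k$ for $k = \dim \V_0(\s_1,\ldots,\s_d)^\perp$.

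For the implication (\ref{eqivcondscal3}) $\Rightarrow$ (\ref{eqivcondscal4}) I would argue by contrapositive: if some nonzero $\x \in \V_0(\s_1,\ldots,\s_d)^\perp$ satisfies (\ref{ineqcond}), then for every $t \ge 0$ and every index tuple with $b_{i_1,\ldots,i_d} > 0$ the exponent $t(x_{1,i_1} + \ldots + x_{d,i_d})$ is nonpositive, hence $f(t\x) \le \sum_{b_{i_1,\ldots,i_d} > 0} b_{i_1,\ldots,i_d}$ stays bounded while $\|t\x\| \to \infty$, contradicting (\ref{eqivcondscal3}).

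The main obstacle is (\ref{eqivcondscal4}) $\Rightarrow$ (\ref{eqivcondscal3}), which needs a normalization argument. Assume (\ref{eqivcondscal4}) holds and, aiming for a contradiction, suppose there is a sequence $\x_l \in \V_0(\s_1,\ldots,\s_d)^\perp$ with $\|\x_l\| \to \infty$ while $f(\x_l) \le M$ for some constant $M$. I would set $\y_l = \x_l/\|\x_l\|$, and use compactness of the unit sphere of the finite-dimensional subspace $\V_0(\s_1,\ldots,\s_d)^\perp$ to extract a convergent subsequence $\y_l \to \y^\star$ with $\|\y^\star\| = 1$ and $\y^\star \in \V_0(\s_1,\ldots,\s_d)^\perp$. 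For each tuple with $b_{i_1,\ldots,i_d} > 0$, the componentwise bound $b_{i_1,\ldots,i_d} e^{x_{1,i_1,l} + \ldots + x_{d,i_d,l}} \le f(\x_l) \le M$ yields $x_{1,i_1,l} + \ldots + x_{d,i_d,l} \le \log(M/b_{i_1,\ldots,i_d})$; dividing by $\|\x_l\|$ and passing to the limit produces $y^\star_{1,i_1} + \ldots + y^\star_{d,i_d} \le 0$. Thus $\y^\star$ is a nonzero element of $\V_0(\s_1,\ldots,\s_d)^\perp$ satisfying (\ref{ineqcond}), contradicting (\ref{eqivcondscal4}). The subtlety here is to ensure that the limiting vector remains in $\V_0(\s_1,\ldots,\s_d)^\perp$ (which is automatic because this subspace is closed) and is genuinely nonzero (which follows from $\|\y_l\|=1$); these are the points where finite-dimensionality of $\V_0(\s_1,\ldots,\s_d)^\perp$ is essential.
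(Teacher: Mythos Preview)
Your argument is correct. Note, however, that the present paper does not actually prove Theorem~\ref{eqivcondscal}: Section~\ref{sec:tenres} explicitly recalls results from \cite{Fri11}, and the theorem is stated there without proof. So there is no proof in this paper to compare against.

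That said, your proof is the natural one and almost certainly coincides with the argument in \cite{Fri11}. The reduction of $\tilde f$ to $f$ via the decomposition $\U(\s_1,\ldots,\s_d)=\V_0(\s_1,\ldots,\s_d)^\perp\oplus\V_0(\s_1,\ldots,\s_d)$ is exactly what the paper exploits later in Section~\ref{sec:tenscalgo} (see the identities \eqref{fP0id}); the equivalence (\ref{eqivcondscal1})$\Leftrightarrow$(\ref{eqivcondscal3}) via Lemma~\ref{existminlem} is precisely how the paper intends that lemma to be used; and the normalization/compactness step for (\ref{eqivcondscal4})$\Rightarrow$(\ref{eqivcondscal3}) is the standard device for passing from a boundedness failure along rays to a linear inequality at infinity. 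All steps are sound, including the two points you flag at the end (closedness of $\V_0(\s_1,\ldots,\s_d)^\perp$ to keep the limit in the subspace, and $\|\y^\star\|=1$ to keep it nonzero).
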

 \section{The scaling algorithm for tensors}\label{sec:tenscalgo}
 In this section we assume that a given $\cB=[b_{i_1,\ldots,i_d}]\in \R_+^{m_1}\times \cdots \times \R_+^{m_d}$ satisfies one of the equivalent conditions of Theorem \ref{eqivcondscal}.  Let $\cB(\x)=[b_{i_1,\ldots,i_d}e^{x_{1,i_1}+\cdots+x_{d,i_d}}]$.
 Hence $f$ has a unique minimum point $\x^\star\in \V_0(\s_1,\ldots,\s_d)^\perp$.
 We now describe our algorithm for finding $\x^\star$.  
 
 We first consider the case where $\V_0(\s_1,\ldots,\s_d)=\{\0\}$.  That is, the system of linear equations given by \eqref{eqcond} and by the conditions (1) of Theorem  \ref{eqivcondscal} has only the trivial solution $\x_1=\cdots=\x_d=\0$.  
 
This condition is satisfied if all the entries of $\cB$ are positive.  Indeed, assume that $\cB>0$.  Sum up the equations in condition (1) on $i_2,\ldots,i_d$ to deduce that $\x_1=t_1\1_{m_1}$.  Similarly, we deduce that $\x_j=t_j\1_{m_j}$ for all $j\in[d]$.  Furthermore  the $M=\prod_{j=1}^d m_j$ equations of (1) are equivalent to one equaiton:
 $t_1+\cdots+t_d=0$.  The conditions \eqref{eqcond} yield that $t_1=\cdots=t_d=0$.
 
 In this case $\tilde f= f$ is a function defined on $\U(\s_1,\ldots,\s_d)$.  We identify 
 $\U(\s_1,\ldots,\s_d)$ with $\R^n=\R^{m_1-1}\times\cdots\R^{m_d-1}$.
 Then our algorithm is applied straightforward as in the case $d=2$, which is described in Section \ref{sec:intro}:  
 Fix $\x^j$ and find the unique $\tilde \x_j=\tilde \x_j(\x^j)$ which satsfies the condition
 \begin{eqnarray*}
 \sum_{i_p\in[m_p],p\in [d]\backslash\{j\}} b_{i_1,\ldots,i_d}e^{x_{1,i_1}+\cdots+x_{d,i_d}}=s_{j,i_j}, \; i_j\in[m_j].
 \end{eqnarray*}
 Let $\x_j(\x^j)=\tilde\x_j -\frac{\s_j^\top \tilde \x_j}{\s_j^\top\1_{m_j}} \1_{m_j}$.  Clearly,
 $\x_j(\x^j)\in \rL(\s_j)$.  Hence $\x_j(\x^j)$ is  the critical point of the strict convex function $g_{\x^j}(\x_j)=f(\x^j,\x_j)$ on $\rL(\s_j)\equiv \R^{m_j-1}$.
 
 We now can apply Theorem \ref{convalgo}.  Our algorithm will converge to a unique minimal point $\x^\star\in\U(\s_1,\ldots,\s_d)$.   The tensor $\cB(\x^\star)$ will have its $d$ sum slices of the form $b\s_1,\ldots,b\s_d$ for some $b>0$.
 
 We now discuss the case where $\V_0(\s_1,\ldots,\s_d)$ is a nontrivial subspace of $\U(\s_1,\ldots,\s_d)$.  In that case we claim that our algorithm applies with a suitable modification.  First observe that 
 \begin{eqnarray*}
 \U(\s_1,\ldots,\s_d)=\V_0(\s_1,\ldots,\s_d)^\perp\oplus \V_0(\s_1,\ldots,\s_d).
 \end{eqnarray*}
 Let 
 \begin{eqnarray*}
  P:\R^n\to \V(\s_1,\ldots,\s_d)^\perp, \quad P_0:\U(\s_1,\ldots,\s_d)\to \V_0(\s_1,\ldots,\s_d)^\perp
 \end{eqnarray*}
 be the orthogonal projection on $\V(\s_1,\ldots,\s_d)^\perp$ and $\V_0(\s_1,\ldots,\s_d)^\perp$ respectively.  Then
 \begin{eqnarray*}
 \x=\y+ \z, \; \x=(\x_1,\ldots,\x_d),\;\y=(\y_1,\ldots,\y_d),\;\z=(\z_1,\ldots,\z_d).
 \end{eqnarray*}
 If $\x\in\R^n$ then  $\y=P\x\in \V(\s_1,\ldots,\s_d)^\perp,\z=(I-P)\x\in \V(\s_1,\ldots,\s_d)$.  If $\x\in \U(\s_1,\ldots,\s_d)$ then $\y=P_0\x\in \V_0(\s_1,\ldots,\s_d)^\perp$
 and $\z=(I-P_0)\x\in \V_0(\s_1,\ldots,\s_d)$.
 
Observe that $\hat f(\x+\z)=\hat f(\x)$ for $\x\in\R^n$ and $\z\in  \V(\s_1,\ldots,\s_d)$.  Hence
\begin{eqnarray*}
\hat f(\x)=\hat f(P\x), \; \nabla \hat f(\x)\in \V(\s_1,\ldots,\s_d)^\perp, \; \nabla \hat f(\x)=\nabla \hat f(P\x),\; \forall \x\in\R^n.
\end{eqnarray*}
 Similarly $\tilde f(\x+\z)=\tilde f(\x)$ for $\x\in\U(\s_1,\ldots,\s_d)$ and $\z\in  \V_0(\s_1,\ldots,\s_d)$.  Furthermore 
 \begin{eqnarray}\label{fP0id}\quad\quad
 \tilde f(\x)=\tilde f(P_0\x)=f(P_0\x),\; \nabla \tilde f(\x)=\nabla \tilde f(P_0\x), \; \forall \x\in \U(\s_1,\ldots,\s_d).
 \end{eqnarray}
 (The simplest way to show these identities is by considering an orthonormal basis in in $\U(\s_1,\ldots,\s_d)$ consisting of vectors in orthonormal bases of $\V_0(\s_1,\ldots,\s_d)^\top$ and $\V_0(\s_1,\ldots,\s_d)$.  Then change to a basis of $\U(\s_1,\ldots,\s_d)=\rL(\s_1)\times \cdots\times \rL(\s_d)$ which is a union of orthonormal bases of $\rL(\s_j)$ for $j\in[d]$.)
 
 Observe that for $\x\in \U(\s_1,\ldots,\s_d)$ the gradient $\nabla \tilde f(\x)$ is a subvector of $\nabla \hat f(\x)$, when we choose the corresponding coordinates in $\R^{m_1}\times\cdots\R^{m_d}$.  Similarly, for $\x\in\V_0(\s_1,\ldots,\s_d)^\perp$ the gradient $\nabla f(\x)$ is a subvector of $\nabla \tilde f(\x)$, if we choose the coordinates using the orthonormal bases in $\V_0(\s_1,\ldots,\s_d)^\perp$ and $\V_0(\s_1,\ldots,\s_d)$ respectively.  Moreover, the coordinates of $\nabla\tilde f(\x)$ corresponding to the chosen orthonormal basis in $\V_0(\s_1,\ldots,\s_d)$ are zero.
 Hence for $\x\in\V_0(\s_1,\ldots,\s_d)^\perp$ the gradient $\nabla f(\x)$ is obtained by deleting the zero coordinates of $\nabla \tilde f(\x)$ corresponding to the chosen orthonormal basis of $\V_0(\s_1,\ldots,\s_d)$.
 In particular we have the equality
\begin{eqnarray}\label{eqgradtilf}\quad\quad
\|\nabla f(\x)\|^2=\|\nabla \tilde f(\x)\|^2=\sum_{j=1}^d \|\nabla _j \tilde f(\x)\|^2 \textrm{ for } \x\in\V_0(\s_1,\ldots,\s_d)^\perp.
 \end{eqnarray}
 \begin{lemma}\label{propWj}  
Assume that a given $\cB=[b_{i_1,\ldots,i_d}]\in \R_+^{m_1}\times \cdots \times \R_+^{m_d}$ satisfies the assumptions of Theorem \ref{eqivcondscal} and one of its equivalent conditions .  
Suppose furthermore that $\dim \V_0(\s_1,\ldots,\s_d)>0$.  For $j\in[d]$ let  $\W_j$ be the following subspace of $\V_0(\s_1,\ldots,\s_d)^\perp$: $\{\bw=P_0(\0,\x_j), \x_j\in\rL(\s_j)\}$.  Then
\begin{enumerate}
\item The dimension of $\W_j$ is $m_j-1$ for $j\in[d]$.
\item $\W_1+\cdots+\W_d=\V_0(\s_1,\ldots,\s_d)^\perp$.  Furthermore, $\V_0(\s_1,\ldots,\s_d)^\perp$ is not a direct sum of $\W_1,\ldots,\W_d$.
\item  Let $\x\in \V_0(\s_1,\ldots,\s_d)^\perp$ and $j\in[d]$.  Choose an orthonormal basis in $\W_j$ and denote by $\nabla f_{\W_j}(\x)$ the gradient of $f$ with respect to the chosen orthonormal basis of the subspace $\W_j$.  Then 
\begin{eqnarray}\notag
&&\|\nabla_j\tilde f(\x)\|\le \|\nabla _{\W_j}f(\x)\| \textrm{ for all }j\in[d],\\
&&\|\nabla f(\x)\|^2\le \sum_{j=1}^d \|\nabla_{\W_j}f(\x)\|^2.
\label{intnabj}
\end{eqnarray}
\end{enumerate}
\end{lemma}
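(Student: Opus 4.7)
The plan is to address the three parts in order, with part (3) being the only substantive one.

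For part (1), I would show that the linear map $T_j : \rL(\s_j) \to \V_0^\perp$ given by $T_j(\x_j) = P_0(\0,\ldots,\x_j,\ldots,\0)$ is injective. If $T_j(\x_j) = \0$, then $(\0,\ldots,\x_j,\ldots,\0)$ lies in $\V_0 \subseteq \V(\s_1,\ldots,\s_d)$, so by condition (\ref{eqcond1}) of Theorem \ref{maintheo} we get $x_{j,i_j} = 0$ whenever $b_{i_1,\ldots,i_d} > 0$. Since $\cB$ has no $(j,i_j)$-zero slice, each $i_j$ appears in some positive entry, forcing $\x_j = \0$. Since $\dim \rL(\s_j) = m_j - 1$, injectivity of $T_j$ yields $\dim \W_j = m_j - 1$.

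For part (2), any $\x = (\x_1,\ldots,\x_d) \in \U(\s_1,\ldots,\s_d)$ decomposes as $\x = \sum_{j=1}^d (\0,\ldots,\x_j,\ldots,\0)$, with each summand still in $\U(\s_1,\ldots,\s_d)$ because $\s_j^\top \x_j = 0$. Applying $P_0$ and using $P_0\x = \x$ for $\x \in \V_0(\s_1,\ldots,\s_d)^\perp$ gives $\V_0^\perp \subseteq \W_1 + \cdots + \W_d$; the reverse inclusion is immediate from $\W_j \subset \V_0^\perp$. For the non-directness, $\sum_{j=1}^d \dim \W_j = \sum_{j=1}^d (m_j - 1) = n = \dim \U(\s_1,\ldots,\s_d)$, which strictly exceeds $\dim \V_0(\s_1,\ldots,\s_d)^\perp = n - \dim \V_0(\s_1,\ldots,\s_d)$ under the standing assumption $\dim \V_0(\s_1,\ldots,\s_d) > 0$.

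For part (3), the anticipated main obstacle, the crucial observation is that $\tilde f$ is constant on every coset of $\V_0(\s_1,\ldots,\s_d)$, so the directional derivative of $\tilde f$ at $\x$ along $(\0,\ldots,\x_j,\ldots,\0) \in \U$ equals its derivative along $\bw := P_0(\0,\ldots,\x_j,\ldots,\0) \in \W_j$, since the two directions differ by an element of $\V_0(\s_1,\ldots,\s_d)$. Via \eqref{fP0id} the first derivative is $\nabla_j \tilde f(\x)^\top \x_j$ and the second is $\nabla f(\x)^\top \bw$, so
\begin{equation*}
\nabla_j \tilde f(\x)^\top \x_j \;=\; \nabla f(\x)^\top \bw \;\le\; \|\nabla_{\W_j} f(\x)\|\,\|\bw\| \;\le\; \|\nabla_{\W_j} f(\x)\|\,\|\x_j\|,
\end{equation*}
where the first inequality is Cauchy--Schwarz applied inside $\W_j$ (the component of $\nabla f(\x)$ orthogonal to $\W_j$ contributes nothing since $\bw \in \W_j$, and $\|\nabla_{\W_j} f(\x)\|$ is exactly the norm of the projection of $\nabla f(\x)$ onto $\W_j$), and the second uses that the orthogonal projection $P_0$ is a contraction. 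Taking the supremum over unit $\x_j \in \rL(\s_j)$ yields the first inequality of \eqref{intnabj}. Squaring, summing over $j \in [d]$, and invoking \eqref{eqgradtilf} delivers the second inequality. The only delicate point is the clean identification of the ambient gradient $\nabla \tilde f$ on $\U(\s_1,\ldots,\s_d)$ with the intrinsic gradient $\nabla f$ on $\V_0(\s_1,\ldots,\s_d)^\perp$, which is legitimate precisely because $\tilde f$ is $\V_0$-invariant.
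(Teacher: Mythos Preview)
Your proof is correct and follows essentially the same route as the paper's: parts (2) and (3) are virtually identical to the original, with (3) hinging on the same idea that the directional derivative of $\tilde f$ along $(\0,\x_j)$ equals that along its $P_0$-projection, followed by Cauchy--Schwarz and the contraction property of $P_0$. The one noteworthy variation is in part (1): you argue algebraically from the defining condition of $\V(\s_1,\ldots,\s_d)$ (namely $x_{j,i_j}=0$ for all $i_j$ appearing in a positive entry, hence $\x_j=\0$ by the no-zero-slice assumption), whereas the paper argues analytically by observing that $P_0(\0,\x_j)=\0$ would force $t\mapsto\tilde f(\0,t\x_j)=\sum_i a_{j,i}e^{tx_{j,i}}$ to be constant, which is impossible for $\x_j\ne\0$ --- your version is a touch more direct, but both are short and yield the same conclusion.
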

\begin{proof} (1) In view of the assumption \eqref{admjas} it follows that $\dim \rL(\s_j)=m_j-1\ge 1$. Assume to the contrary that $\dim \W_j<m_j-1$,  Then there exists $\x_j\in \rL(\s_j)\setminus\{\0\}$ such that $P_0(\0,\x_j)=\0$.  Use the first equality 
of\eqref{fP0id} to deduce that $\tilde f((\0,t\x_j))=f(P_0(\0,t\x_j))=f(\0)$ for each $t\in\R$.
As $\cB$ is a nonnegative tensor with no $(k,i_k)$-zero slice it follows that
\begin{eqnarray*}
\tilde f((\0,t\x_j))=\sum_{i=1}^{m_j}e^{tx_{j,i}}a_{j,i}, \quad \x_j=(x_{j,1},\ldots,x_{j,m_j})^\top, a_{j,i}>0 \textrm{ for } i\in[m_j].
\end{eqnarray*}
As $\x_j\ne \0$ the above function of $t$ can't be a constant function.  Hence $\dim\W_j=m_j-1$.

\noindent
(2) Let $\x=(\x_1,\ldots,\x_d)\in \rL(\s_1)\times\cdots\times \rL(\s_d)$.  Then $\x=\sum_{j=1}^d (\0,\x_j)$.  Hence $P_0\x=\sum_{j=1}^d P_0(\0,\x_j)$.  Therefore  $\W_1+\cdots+\W_d=\V_0(\s_1,\ldots,\s_d)^\perp$.  Clearly 
\begin{eqnarray*}
&&\dim \V_0(\s_1,\ldots,\s_d)^\top=\dim \U(\s_1,\ldots,\s_d)-\dim \V_0(\s_1,\ldots,\s_d)
<\\
&&\dim \U(\s_1,\ldots,\s_d)=\sum_{j=1}^d (m_j-1).
\end{eqnarray*}
Hence $\V_0(\s_1,\ldots,\s_d)^\perp$ is not a direct sum of $\W_1,\ldots,\W_d$. 

\noindent
(3) If $\nabla_j\tilde f(\x)=\0$ the inequality \eqref{intnabj} trivially holds.  Assume that $\nabla_j\tilde f(\x)\ne\0$.  Let $\bw_j=\frac{1}{\|\nabla_j\tilde f(\x)\|}\nabla_j\tilde f(\x)$.
Then $\|\nabla_j\tilde f(\x)\|=\nabla \tilde f(\x)^\top (\0,\bw_j)$.   Let
\begin{eqnarray*}
&&\bu_j=P_0(\0,\bw_j)\in \W_j, \quad \bv_j=(I-P_0)(\0,\bw_j)\in \V_0(\s_1,\ldots,\s_d), \\
&&(\0,\bw_j)=\bu_j+\bv_j, \quad \|\bu_j\|^2 +\|\bv_j\|^2=\|\bw_j\|^2=1.
\end{eqnarray*} 
As $\nabla \tilde f(\x)^\top \bv_j=0$ we deduce that 
\begin{eqnarray*}
&&\|\nabla_j \tilde f(\x)\|=\nabla \tilde f(\x)^\top (\0,\bw_j) =\nabla \tilde f(\x)^\top \bu_j=\nabla_{\W_j} f(\x)^\top \bu_j\\
&&\le \|\nabla_{\W_j} f(\x)\| \|\bu_j\|\le \|\nabla_{\W_j} f(\x)\|.
\end{eqnarray*}
Use \eqref{eqgradtilf} and the above inequalities to deduce \eqref{intnabj}.
\end{proof}
We now give the modified algorithm:
 
 \noindent
$\quad\quad$ $\;\;$\textbf{Modified algorithm} 
\begin{itemize}
\item[] Choose $\x_0\in\V_0(\s_1,\ldots,\s_d)^\perp$. 
\item[] for $k:=0,1,2, \ldots$
\item[] $\quad$$j\in\argmax\{\|\nabla_{\W_l} f(\x_k)\|, l\in[d]\}$
\item[] $\quad$$\x_{k+1}=P_0(\x_k^j, \x_j(\x_k^j))$
\item[]end
\end{itemize} 
 
We explain and justify the modified algorithm.
 View $\x_0$ as a point in $\U(\s_1,\ldots,\s_d)$. 
 Then $\x_j(\x^j_0)$ is a critical point of the strict convex function $g_{\x^j_0}(\x_j)=\tilde f(\x^j_0,\x_j), \x_j\in \rL(\s_j)$ as in the beginning of this section.  Let $ \x_1'=\x_0 +(\0,\x_j(\x_0^j)-\x_{j,0})$.  Clearly $\x_1'\in\U(\s_1,\ldots,\s_d)$.   Note that  $\nabla_j \tilde f(\x_1')=0$.   Hence $\tilde f(\x_1 +(\0,\x_j))\ge f(\x_1)$ for each $\x_j\in\rL(\s_j)$.  
 Let $\x_1=P_0\x_1'=\x_0+P_0(0,\x_j(\x_0^j)-\x_{j,0})$.  The first equality of \eqref{fP0id} yields:
\begin{eqnarray*}
&&f(\x_1)=\tilde f(\x_1)=\tilde f(P_0\x'_1)=\tilde f(\x_1)\le 
\tilde f(\x_1'+(\0,\x_j))=\\
&&\tilde f(P_0(\x'_1+(\0,\x_j))= f(\x_1+P_0(\0,\x_j)) \textrm{ for all } \x_j\in\rL(\s_j).
\end{eqnarray*}
Hence $\nabla_{\W_j} f(\x_1)=0$.  Therefore $\x_1$ is the minimum of $f$ on the affine space $\x_0+\W_j$.  Inequality \eqref{intnabj} yields that $\|\nabla_{\W_j} f(\x_0)\|\ge \frac{\|f(\x_0)\|}{\sqrt{d}}$, as in the case of the original algorithm.  As $\nabla_{\W_j} f(\x_1)=0$ we deduce from \eqref{intnabj} that $\|\nabla_{\W_j} f(\x_k)\ge \frac{\|f(\x_k)\|}{\sqrt{d-1}}$ for $k=1$.  Same inequality holds for all $k\ge 1$.  Hence Theorem \ref{convalgo} applies in this case too.
\section{A generalization of discrete Schr\"odinger's bridge problem}\label{sec:sbp}
The classical Schr\"odinger bridge problem, studied by Schr\"odinger in \cite{Sch31, Sch32}, seeks the most likely probability law for a diffusion process, in path space, that matches marginals at two end points in time.  
The discrete version of Schr\"odinger's bridge problem for Markov chains can be stated as follows \cite{GP15, Fri17}:  
\begin{problem}\label{SBPMC}  Let $A\in\R_+^{n\times n}$ be a column stochastic matrix.  Assume that $\ba,\bb$ are two positive probability vectors.  Does there exists a scaling of $A$, denoted as $B$, such that $B$ is column stochastic and $B\ba=\bb$?
\end{problem}

We give a necessary and sufficient condition for a solution to generalized  Schr\"odinger's bridge problem:
\begin{theorem}\label{GSBMB}  Let $A\in\R^{m\times n}_+$ be a given matrix.
Assume that $\bb\in \R^m,\ba,\bc\in\R^n$ be given positive vectors that satisfy $\bc^\top \ba=\1_m^\top \bb$.  Then there exists a scaling of $A$, denoted as $B$, such that 
\begin{eqnarray}\label{GSBMB1}
B\ba=\bb, \quad B^\top \1_m=\bc,
\end{eqnarray}
if and only if the following conditions holds:
There exists $C\in\R^{m\times n}_+$ with the same $0$-pattern as $B$ that satisfies \eqref{GSBMB1}.  If this condition holds then $B$ is unique and can be found by the modified algorithm.
\end{theorem}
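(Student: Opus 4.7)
The plan is to reduce Theorem \ref{GSBMB} to the $d=2$ case of the tensor scaling results of Section \ref{sec:tenres}, via a simple change of variables that converts the constraint $B\ba=\bb$ into a row-sum constraint. Define the auxiliary matrix $\tilde A := A D(\log\ba) \cdot \ldots$ — more precisely, let $\tilde A \in \R_+^{m \times n}$ have entries $\tilde a_{i,j}=a_{i,j}\, a_j$, where $a_j$ is the $j$-th entry of $\ba$. Since $\ba>0$, $\tilde A$ has the same zero pattern as $A$. A scaling $B=D(\bu)AD(\bv)$ satisfies \eqref{GSBMB1} if and only if $\tilde B := D(\bu)\tilde A D(\bv)$ has row sums $\bb$ and column sums $\tilde\bc := \bc \odot \ba$, because
\begin{eqnarray*}
(B\ba)_i=\sum_j e^{u_i+v_j}a_{i,j}a_j=(\tilde B\1_n)_i,\qquad (B^\top\1_m)_j=\sum_i e^{u_i+v_j}a_{i,j}=(\tilde B^\top\1_m)_j/a_j\cdot a_j.
\end{eqnarray*}
The compatibility condition $\1_m^\top\bb=\1_n^\top\tilde\bc=\bc^\top\ba$ is exactly the hypothesis.

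Next I invoke the $d=2$ case of Corollary \ref{mentens} applied to $\tilde A$ with slice sums $\bb,\tilde\bc$: a positive diagonal scaling of $\tilde A$ with those row and column sums exists if and only if there is a nonnegative matrix $\tilde C\in\R_+^{m\times n}$ with the same zero pattern as $\tilde A$ (equivalently as $A$) satisfying $\tilde C\1_n=\bb$ and $\tilde C^\top\1_m=\tilde\bc$. The map $C\mapsto \tilde C := C D(\ba)$ is a bijection between nonnegative matrices with the zero pattern of $A$ satisfying \eqref{GSBMB1} and nonnegative matrices with the same zero pattern satisfying the row/column sum conditions for $\tilde A$, so the condition of Theorem \ref{GSBMB} is equivalent to the Menon condition for $\tilde A$. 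This gives the "if and only if" statement.

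For uniqueness and the algorithmic part, I apply Theorem \ref{eqivcondscal} to $\tilde A$: under the equivalent Menon condition, the associated convex function $\hat f(\bu,\bv)=\sum_{i,j}\tilde a_{i,j} e^{u_i+v_j}$ has a unique minimum on $\V_0(\bb,\tilde\bc)^\perp\subset \rL(\bb)\times \rL(\tilde\bc)$ by Lemma \ref{strictconvf}. Its critical point equations are precisely the scaling equations for $\tilde B$, hence determine $B=\tilde B D(\ba)^{-1}$ uniquely up to the trivial ambiguity in $\V_0$ which collapses after projecting to $\V_0^\perp$. Then the modified algorithm of Section \ref{sec:tenscalgo} applied to $\tilde A$ produces a sequence $\x_k\in\V_0(\bb,\tilde\bc)^\perp$ converging geometrically to the unique minimum by Theorem \ref{convalgo}, and translating back yields $B$.

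The main obstacle I anticipate is handling the case when $A$ has zero entries: one must verify that condition \eqref{eqivcondscal4} of Theorem \ref{eqivcondscal} (no nontrivial $\x$ in $\V_0(\bb,\tilde\bc)^\perp$ satisfying the inequalities \eqref{ineqcond}) is indeed equivalent to the existence of the nonnegative witness $C$. This is essentially the content of Theorem \ref{maintheo} for $d=2$, but one must carefully track that the zero pattern of $\tilde A$ coincides with that of $A$ and that the Menon-type witness $\tilde C$ corresponds to the stated $C$ under the bijection $C\leftrightarrow \tilde C=CD(\ba)$. Once this bookkeeping is in place the theorem follows, and geometric convergence of the modified algorithm is inherited from Theorem \ref{convalgo}.
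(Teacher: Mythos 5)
Your proof is correct and takes essentially the same approach as the paper: both reduce to the $d=2$ (matrix) scaling problem via the change of variables $\tilde A = A D(\ba)$ with prescribed row sums $\bb$ and column sums $\bc\circ\ba$, invoke Menon's theorem (Corollary~\ref{mentens}) for the existence criterion, and appeal to the modified algorithm with Theorem~\ref{convalgo} for uniqueness and geometric convergence. You merely supply more of the bookkeeping (the witness bijection $C\leftrightarrow CD(\ba)$ and the explicit references to Lemma~\ref{strictconvf} and Theorem~\ref{eqivcondscal}) that the paper's terse proof leaves implicit, so the ``obstacle'' you flag at the end is already handled by the cited results and poses no real gap.
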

\begin{proof}  Assume that $\ba=(a_1,\ldots,a_n)^\top, \bc=(c_1,\ldots,c_n)^\top$.
Denote by $D(\ba)\in\R^{n\times n}$ the diagonal matrix whose diagonal entries are the coordinates of $\ba$.  Let $\tilde A=AD(\ba)$ and consider the scaling of $B=D_1\tilde AD_2$ with the row sum $\bb$ and column sum $\bc\circ\ba=(c_1a_1,\ldots,c_na_n)^\top$.  Note that condition $\1_n^\top (\bc\circ \ba )=\1_m^\top \bb$ is the condition $\bc^\top\ba=\1_m^\top \bb$.  Next observe that this scaling of $\tilde A$ is equivalent to the scaling of $A$ which satisfies \eqref{GSBMB1}.  The result of \cite{Men68} yields that $B$ exists if and only if there exists $C\in\R^{m\times n}_+$ with the same $0$-pattern as $B$ that satisfies \eqref{GSBMB1}.   Use the modifed algorithm to find the scaling of $\tilde A$.
\end{proof}

\bibliographystyle{plain}

\begin{thebibliography}{MMM}
\bibitem{ALOW17} Z. Allen-Zhu, Y. Li,  R. Oliveira and A. Wigderson, Much faster algorithms for matrix scaling, arXiv:1704.02315.

\bibitem{Bap82} R.B. Bapat $D_1AD_2$ theorems for multidimensional matrices,  \emph{Linear Algebra Appl.}
 48 (1982), 437--442.
 
 \bibitem{BR89} R.B. Bapat and T.E.S. Raghavan, An extension of a theorem of Darroch and Ratcliff
 in loglinear models and its application to scaling multidimensional matrices,  \emph{Linear Algebra Appl.}
 114/115 (1989), 705-715.

 \bibitem{Bru68} R.A. Brualdi, Convex sets of nonnegative matrices, \emph{Canad. J. Math} 20 (1968),
 144-157.

 \bibitem{BPS66} R.A. Brualdi, S.V. Parter and H. Schneider,
 The diagonal equivalence of a nonnegative matrix to a stochastic
 matrix, \emph{J. Math. Anal. Appl.}  16 (1966), 31--50.
 \bibitem{BLS15} S. Bubeck, Y. T. Lee, M. Singh, A geometric alternative to Nesterov's accelerated gradient descent,  arXiv:1506.08187.

 \bibitem{FL89} J. Franklin and J. Lorenz, On the scaling of multidimensional matrices,
 \emph{Linear Algebra Appl.} 114/115 (1989), 717-735.
 
\bibitem{Fri11} S. Friedland, Positive diagonal scaling of a nonnegative tensor to one with prescribed slice  sums,  \emph{Linear Algebra Appl.}, vol. 434 (2011), 1615-1619.

\bibitem{Fri17} S. Friedland, On Schr$\ddot{\textrm{o}}$dinger's bridge problem, SB MATH, 2017, 208 (11), 139--156,

\bibitem{GP15}  T.T.~ Georgiou and M.Pavon, Positive contraction mappings for classical and quantum Schr\"odinger systems,
\emph{J. Math. Physics}, 56 (2015), 1--24.

\bibitem{HS52} M.R. Hestenes and E. Stiefel. Methods of conjugate gradients for
solving linear systems,  \emph{Journal of Research of the National Bureau of Standards}, 49 (1952), 409--436.

 \bibitem{Kar} N.K. Karmakar, A new polynomial algorithm for
 linear programming, \emph{Combinatorica} 4 (1984), 373-395.

 \bibitem{Kha} L.G. Khachiyan, A polynomial algorithm in linear
 programming, \emph{Doklady Akad. Nauk SSSR} 224 (1979),
 1093-1096.  English Translation: \emph{Soviet Mathematics
 Doklady} 20, 191-194.
 
 \bibitem{Men68} M.V. Menon, Matrix links, an extremisation problem and the reduction of a nonnegative
 matrix to one with with prescribed row and column sums, \emph{Canad. J. Math} 20 (1968),
 225-232.

 \bibitem{MS69} M.V. Menon and H. Schneider, The spectrum of a nonlinear operator associated with a matrix,
 \emph{Linear Algebra Appl.} 2 (1969), 321-334.

 \bibitem{Rag84} T.E.S. Raghavan, On pairs of multidimensional matrices, \emph{Linear Algebra Appl.}
 62 (1984), 263-268.
 
 \bibitem{Sch31}
E. Schr\"odinger, \"Uber die Umkehrung der Naturgesetze,
\emph{Sitzungs- berichte der Preuss Akad. Wissen. Berlin}, Phys. Math. Klasse
(1931), 144--153

\bibitem{Sch32} E. Schr\"odinger, Sur la th\'eorie relativiste de l'\'electron et l'interpr\'etation de la m\'ecanique quantique
\emph{Ann. Inst. H. Poincar\'e 2} 2 (4) (1932), 269--310.
 
 \bibitem{Sin64}  R.A. Sinkhorn,  A relationship between arbitrary positive matrices and doubly stochastic matrices, \emph{Ann. Math. Statist.} 35 (1964),  876--879.

 \bibitem{SK67} R. Sinkhorn and P. Knopp, Concerning nonnegative matrices and doubly stochastic matrices,
 \emph{Pac. J. Math.} 21 (1967), 343-348.





 \end{thebibliography}

\end{document}